\keywords{Morse Smale, Hyperspace, Topological Entropy, Shadowing}
\title[]
          {Shadowing, Topological entropy and Recurrence of induced Morse-Smale diffeomorphism}
\author[A. Arbieto]{A. Arbieto}
\address{Instituto de Matem\'atica, Universidade Federal do Rio de Janeiro, P. O. Box 68530, 21945-970 Rio de Janeiro, Brazil.}
\email{arbieto@im.ufrj.br}
\author[J. Bohorquez]{J. Bohorquez}
\address{Instituto de Ci\^encias Exatas e Aplicadas, Universidade Federal de Ouro Preto, Jo\~ao Monlevade, Brazil.}
\email{jennyffer.barrera@ufop.edu.br}
\newtheorem{theorem}{Theorem}
\newtheorem{lemma}[theorem]{Lemma}
\newtheorem{proposition}[theorem]{Proposition}
\theoremstyle{definition}
\newtheorem{definition}[theorem]{Definition}
\newtheorem{remark}[theorem]{Remark}
\newtheorem*{notation}{Claim}
\newtheorem{question}{Question}
\newcommand{\eps}{\varepsilon}
\newenvironment{ex}{{\bf Example: }}{\hfill\mbox{$\Diamond$}\\ }
\newcommand{\ack}{{\bf Acknowledgements. }}
\begin{document}

\begin{abstract}
Let $f: M \rightarrow M$ be a Morse-Smale diffeomorphism defined on a compact and connected manifold without boundary. Let $C(M)$ denote the hyperspace of all subcontinua of $M$ endowed with the Hausdorff metric and $C(f): C(M) \rightarrow C(M)$ denote the induced homeomorphism of $f$. We show in this paper that if $M$ is the unit circle $S^1$ then the induced map $C(f)$ has not the shadowing property. Also we show that the topological entropy of $C(f)$ has only two possible values: $0$ or $\infty$. In particular, we show that the entropy of $C(f)$ is $0$ when $M$ is the unit circle $S^1$ and it is $\infty$ if the dimension of the manifold $M$ is greater than two. Furthermore, we study the recurrence of the induced maps $2^f$ and $C(f)$ and sufficient conditions to obtain infinite topological entropy in the hyperspace.
\end{abstract}

\maketitle

\section{Introduction}

\label{d.sets}

Given a compact metric space $(X,d)$, $2^X$ is the hyperspace of nonempty closed subsets of $X$ with the Hausdorff metric $H_d$. A continuous map $f: X \rightarrow X$ induces a continuous map $2^f: 2^X \rightarrow 2^x$ defined by $2^f(A)=f(A)$ for all $A \in 2^X$. It is natural to ask which of the topological properties of $f$ carry over to $2^f$ and conversely. Such relationships has been studied during the last forty five years by several authors, see for instance  \cite{AIM} \cite{BS}  \cite{Nilson-romulo} \cite{FG} \cite{Paloma-Mendez} \cite{DO} \cite{LR}. When $(X,d)$ is a continuum space, that is, a nonempty compact and connected metric space then $f$ induces the map $C(f)= 2^f|_{C(X)}$ where $C(X)$ is the hyperspace of subcontinua of $X$. In this article we call $C(f)$ the induced continuum map of $f$. Many techniques used to study the map $2^f$ can not be applied to $C(f)$ like approximate a compact set by a finite number of points, see \cite{AIM} \cite{LR}. Actually, as we will show in this paper, some results for the continuum map $C(f)$ are different that the ones for $2^f$ and another one remain the same.\\

%

On the other hand, let $M^n$ is a $n$-dimensional compact and connected manifold without boundary. A diffeomorphism $f: M ^n \rightarrow M^n$ is called a Morse-Smale diffeomorphism if its nonwandering set $\Omega(f)$ consists of finitely many finite hyperbolic periodic points ($\Omega(f)=Per(f)$) whose invariant manifolds have mutually transversal intersections. In this paper, we are interested mainly in to study some topological properties of the induced continuum map $C(f)$ of Morse-Smale diffeomorphisms as: recurrence, shadowing property and topological entropy. We will explain bellow.\\

In Section \ref{Section-2}, we define the objects and state the results that will be needed throughout this article.\\

In Section \ref{Section-3}, we study the recurrence of the induced maps $2^f$ and $C(f)$ of Morse-Smale diffeomorphisms. We show that a finite number of periodic points on the base space could be generate a infinite set of periodic points on the hyperspace and which is dense in the non-wandering set. Besides that, with extra hypothesis we show that there exist uncountably many homoclinic points in the non-wandering set of $C(f)$.\\

Section \ref{Section-4} is devoted to study the shadowing property of the induced map of Morse-Smale diffeomorphisms. The notion of shadowing was introduced independently by Anosov \cite{Anosov} and Bowen \cite{Bowen} in the 1970s. It is a well known property in the qualitative theory of dynamical systems, see \cite{Aoki-Hiraide} \cite{pilyugim-shad}. 
Good and Fern\'andez, in \cite{FG}, showed that a map $f: X \rightarrow X$ on a compact metric space has the shadowing property if and only if its induced map $2^f$ has the shadowing property. Besides that, with the same arguments they showed that if $C(f)$ has the shadowing property then $f$ has shadowing. Nevertheless, it was not known if the conversely is true but we show that the conversely is false as follow: \\

\textbf{Theorem A.}
Let $f: S^1 \rightarrow S^1$ be a Morse-Smale diffeomorphism. Then the continuum map $C(f)$ does not have the shadowing property.\\

It is well known that the Morse-Smale diffeomorphisms on $S^1$ have the shadowing property. In fact, Yano \cite{Yano-2} showed necessary and sufficient conditions under which a homeomorphism of $S^1$ has the shadowing property and, in particular, the Morse-Smale diffeomorphisms satisfy this conditions. Also, the set of Morse-Smale diffeomorphisms is an open and dense set in Diff$\,^r(S^1)$, with $r \geq 1$. Thus, by Theorem A we have that there is an open and dense set in Diff$\,^r(S^1)$, such that $f$ has the shadowing property but the induced map $C(f)$ does not have shadowing. Besides that, it was shown that structurally stable systems have the shadowing property \cite{Robinson}  \cite{pilyugim-shad} \cite{pilyugim-stable}. In particular, Morse-Smale diffeomorphisms on manifolds with dimension $n \geq 2$ have the shadowing Property. We show that, with extra hypothesis, the induced map $C(f)$ does not have Shadowing.\\

And finally, in Section \ref{Section-5}, we study the topological entropy of the induced continuum map of Morse-Smale diffeomorphisms. Topological entropy is a way of measuring the complexity of a dynamical system. In this context, a dynamical systems is said to be chaotic if the topological entropy is positive. In \cite{BS}, Bauer and Sigmund showed that if $f$ is a continuous map and it has positive topological entropy then the induced map $2^f$ has infinite topological entropy. Following the same argument in \cite{BS}, if $(X,d)$ is a continuum and $f$ has positive topological entropy then the induced continuum map $C(f)$ has positive topological entropy, but not necessarily infinite, see examples in \cite{DO} \cite{LR}. However, the idea that  homoclinic orbit and heteroclinic orbit produces $\log 2$ in the hyperspace and so, zero entropy can induce positive entropy, appeared for the first time probably in \cite{DO}. The authors there also study some continuum dynamics of graph maps. On the other hand, Koichi Yano \cite{KY} showed that generic homeomorphisms on manifolds $n$-dimensional with $n\geq2$ has infinite topological entropy, therefore the induced hyperspace maps $2^f$ and $C(f)$ have infinite topological entropy. A natural question is: what happen if $f$ has topological entropy zero? \\

In the literature, there are some results that answer partially this question. For example, Lampart-Raith characterized the topological entropy of the induced continuum map of homeomorphisms defined on the unit circle $S^1$ or on the interval $I$, see \cite{LR}. They showed the topological entropy of the induced hyperspace map from homeomorphisms on the unit circle (or interval) can be zero or infinite, while the topological entropy of the induced continuum map is zero. Another example is the dendrite homeomorphism. In fact, the authors in \cite{AEO} proved that every homeomorphism defined on dendrite has zero entropy. And later, Acosta, Illanes and M\'endez-Lango in \cite{AIM} gave two examples of dynamical systems defined on different dendrites such that the continuum induced map has infinite topological entropy.\\

The Morse-Smale diffeomorphisms is a class of dynamical systems with zero topological entropy. And we show that the topological entropy of the induced map $C(f)$ could be zero or infinite as following\\

\textbf{Theorem B.} Let $f: M^n \rightarrow M^n$ be a Morse-Smale diffeomorphism, then the topological entropy of its induced continuum map $C(f)$ is zero or infinite.\\

Note that he topological entropy of the induced map $C(f)$ depends of the dimension of the base space. In particular, the entropy is $0$ when $M^n$ is the \textcolor{blue}{unit} circle and it is infinite if the dimension of the manifold is greater than one.


Also, we will give sufficient conditions to obtain infinite topological entropy to the induced maps $2^f$ and $C(f)$.

\section{Preliminaries}\label{Section-2}

In this section we define the objects that appear in the statements of the results in the introduction and collect some results which will be needed later.

\subsection{Morse-Smale diffeomorphisms}
\label{section-preliminares}

Let Diff$\,^r(M^n)$ denote the group of $C^r$ diffeomorphisms of a compact and connected $n$-dimensional manifold $M$ without boundary endowed with the $C^r$ topology, $r\geq 1$. For $f \in$ Diff$\,^r(M^n)$ we denote by $Per(f)$ the set of periodic points, by $Fix(f)$ the set of fix points, by $\Omega(f)$ the set of non-wandering points and by $\mathcal{R}(f)$ the set of recurrent points of $f$. A point $x \in Per(f)$ of period $n$ is hyperbolic if the derivative $(Df^n)_x$ has its spectrum disjoint from the \textcolor{blue}{unit} circle in the complex plane $\mathbb{C}$. In this case we have the existence of stable and unstable manifolds of $x$, denoted by $W^s(x)$ and $W^u(x)$.


\begin{definition}
$f \in$ Diff$\,^r(M^n)$ is called Morse-Smale if it satisfies the following conditions
\begin{itemize}
\item [i.] $\Omega(f)$ consists of a finite number of fixed and periodic points, all hyperbolic;
\item [ii.] the stable and unstable manifolds of the fixed and periodic points are all transversal to each other.
\end{itemize}
\end{definition}

If $M^n$ is the circle, then the following are true:
\begin{itemize}
\item The expanding and contracting	periodic points alternate.

\item If $f$ is orientation preserving, all periodic points have the same period, and if $f$ is orientation reversing all periodic points	have period one or two.

\item The set of Morse-Smale diffeomorphisms is dense in Diff$\,^r(S^1)$, $r\geq 1$.
\end{itemize}

Other important facts about Morse-Smale diffeomorphisms are the following:

\begin{itemize}
\item Every Morse-Smale diffeomorphism admits a filtration.

\item Every Morse-Smale diffeomorphism has an attractor periodic point and a repeller periodic point.

\item If $p$ is an attractor periodic point then exist $q$ a repeller periodic points such that the stable and unstable manifolds of these points intersects transversely.

\item For every Morse-Smale diffeomorphism, $M= \bigcup W^s(p_i)= \bigcup W^u(p_i)$ with $p_i \in \Omega(f)$. Therefore, there exist an attractor periodic point $p$ and a repeller periodic point  $q$ such that $W^s(p) \cap W^u(q)$ is a nonempty open set Diff$\,^r(M^n)$.

\item The set of Morse-Smale diffeomorphisms is open (and nonempty) in Diff$\,^r(M^n)$ for any manifold $M^n$ and any $r \geq 1$.

\item If $f \in$ Diff$\,^r(M^n)$ is Morse-Smale then $f$ is structurally stable.

\item The set of Morse-Smale diffeomorphisms is not dense in Diff$\,^r(M^n)$, $n \geq 2$.
\end{itemize}

\subsection{Hyperspace}

Let $X$ be a compact metric space, we consider the following hyperspaces of $X$:
\begin{itemize}
	\item $2^X= \{A \subset X: \textrm{$A$ is a nonempty and closed}\}$ is the \emph{hyperspace} of closed nonempty subsets of $X$.
	
    \item $C(X)= \{ A \in 2^X: \textrm{$A$ is connected}\}$
     is the \emph{continuum hyperspace} of $X$.

    \item $C_n(X)= \{ A \in 2^X: \textrm{$A$ has at most $n$ components}\}$
    is the \emph{n-fold hyperspace} of $X$.
 
    \item $F_n(X)= \{ A \in 2^X: \textrm{$A$ has at most $n$ points}\}$
    is the \emph{n-fold symmetric product hyperspace} of $X$.

\end{itemize}

\begin{definition}
Given a map $f:X \to X$ defined on compact metric space, the induced maps are given in the following way:
\begin{itemize}
\item The \emph{induced hyperspace map} $2^f: 2^X \rightarrow 2^X$ is given by $2^f(A)=f(A)$.

\item The \emph{induced continumm map} $C(f): C(X) \rightarrow C(X)$ is given by $C(f)=2^f|_{C(X)}$.

\end{itemize}
\end{definition}

Given a compact metric space $X$ with metric $d$. For any $r>0$ and any $A \in 2^X$, the \emph{open ball about} $A$ \emph{of radius} $r$ is given by:
$$ V(A,r)=\{x \in X: d(x,A) < r \}.$$

Since that $X$ is a compact metric space with metric $d$, then (see for example \cite{SM}) $2^X$ is a compact metric space when equipped with the Hausdorff metric

$$d_H(A,B)=\inf \{\varepsilon > 0: A \subset V(B,\varepsilon)\\\ \textrm{    and    } \\\ B \subset V(A,\varepsilon)\}.$$

The topology generated by $d_H$ coincides with the Vietoris topology. Given a non-empty compact set $A \in 2^X$ and $r>0$, we define the \emph{open ball of center} $A$ and radius $r$ as the set
$$B_H(A,r)=\{K \in 2^X: d_H(K,A)<r\}.$$

\begin{remark}
If $f$ is a homeomorphism, the induced maps $2^f$ and $C(f)$ are also homeomorphisms. Moreover, if $f$ and $g$ are topologically conjugate then $2^f$ (or $C(f)$) and $2^g$ (or $C(g)$) are topologically conjugate.
\end{remark}

A \emph{continuum} is a nonempty, compact and connected metric space. The topological limit, with respect to the Hausdorff metric, of a sequence of closed nonempty sets $(K_n)_n$ in a metric space is denoted by Lim$K_n$.


\subsection{Shadowing}
Given a homeomorphism $f: X \rightarrow X$ on a compact metric space $X$, recall that a sequence $(x_n)_{n \in Z}$ is called a $\delta$-\emph{pseudo orbit}  of $f$ if
$$
d(f(x_n),x_{n+1}) \leq \delta \hspace{0.2cm} \textrm{for all} \hspace{0.2cm} n \in \mathbb{Z}
$$
The homeomorphism $f$ is said to have the \emph{shadowing property} if for every $\varepsilon >0$ there exists $\delta>0$ such that every $\delta$-pseudo orbit $\{x_n\}_{n \in \mathbb{Z}}$ of $f$ is $\varepsilon$-shadowed by a real orbit of $f$; i.e, there exists $x \in X$ such that 
$$
d(x_n,f^n(x)) < \varepsilon  \hspace{0.2cm} \textrm{for all} \hspace{0.2cm} n \in \mathbb{Z}.
$$

It is easy to show that $f$ has the shadowing property if and only if $f^n$ has the same property (with other constants). See Aoki and Hiraide \cite{Aoki-Hiraide} (1999, p. 79).  


Examples of dynamical systems with the shadowing property are the Morse-Smale diffeomorphisms defined on $S^1$. See Pilyugin \cite{pilyugim-shad} (1999,p. 174).

\subsection{Topological entropy}
We give the definition of topological entropy using separating sets. This was done by Dinaburg and Bowen, see \cite{w}. Let $(X,d)$ be a compact metric space and $f: X \rightarrow X$ be a continuous map. If $n$ is a natural number then $d_n(x,y)= \max_{0 \leq i \leq n-1} d(f^i(x), f^i(y)$. 

Let $n$ be a natural number and $\varepsilon >0$. A subset $E$ of $X$ is said to be $(n,\varepsilon)$-\emph{separated} if $x, y \in E$, $x \neq y$, implies $d_n(x,y)>\varepsilon$ (i.e., for every point $x \in E$ the set $\bigcap_{i=0}^{n-1}f^{-i}B(f^i(x),\varepsilon)$ contains no other point of $E$). Let $s(n,\varepsilon)$ be the largest cardinality of any $(n,\varepsilon)$ separated subset of $X$. Then the topological entropy of $f$ is given by
$$
h(f)= \lim_{\varepsilon \rightarrow 0} \limsup_{n \rightarrow \infty} \frac{1}{n} \log s(n,\varepsilon).
$$


\section{Recurrence of the induced Morse-Smale diffeomorphisms}\label{Section-3}

Morse-Smale diffeomorphisms could generate interesting topological properties in its induced hyperspace maps $2^f$ and $C(f)$, as we will show in this section. Some of this properties has been well studied in the last years for continuous maps, see for example \cite{BS} \cite{Nilson-romulo} \cite{DO}, but we will provide them here. To prove Proposition \ref{dyn-circle-2^f} we follow the techniques used in \cite{Nilson-romulo} by N. Bernardes and R. Vermersch about \textit{dumbbells}. Note that in the proof of the following proposition we use the information of the dynamics of Morse-Smale diffeomorphisms on the circle, that is, the expanding and contracting periodic points alternate. 

\begin{proposition}
\label{dyn-circle-2^f}
Let $f:S^1\to S^1$ be a Morse-Smale diffeomorphism then there exists $n \in \mathbb{N}$ such that
\begin{itemize}
\item [i.] $2^{f}$ has uncountably many periodic points of each period $k\geq 1$;
\item [ii.] $R(2^{f}) \neq \Omega(2^{f}) = \overline{Per(2^{f})}= CR(2^{f})$;
\item [iii.] The induced map $2^f$ is not transitive;
\end{itemize}
where $R(2^f)$ and $CR(2^f)$ denote, respectively, the set of recurrent and chain recurrent points of $2^f$.
\end{proposition}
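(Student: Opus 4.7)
The plan rests on the elementary structure of a Morse-Smale $f$ on $S^1$: finitely many periodic orbits alternating between attractors and repellers, with every non-periodic orbit having a repeller orbit as $\alpha$-limit and an attractor orbit as $\omega$-limit. Fix an adjacent attractor $p$ and repeller $q$ with $I = W^u(q) \cap W^s(p)$ an open arc, and let $n$ be the common period of the periodic points. For (i), for each $k \geq 1$ and each non-periodic $x \in I$, I consider
$$
A_{x,k} \;=\; \overline{\{f^{km}(x) : m \in \mathbb{Z}\}}.
$$
This set is $f^k$-invariant by construction, so $(2^f)^k A_{x,k} = A_{x,k}$; and for $0<j<k$ the $f^k$-orbits of $x$ and of $f^j(x)$ differ (else $f^{j-km}(x)=x$ for some $m$, contradicting $x \notin Per(f)$), so $(2^f)^j A_{x,k} \neq A_{x,k}$. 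Thus the $2^f$-period of $A_{x,k}$ is exactly $k$. Letting $x$ range over a fundamental domain for the first return of $f^k$ to $I$, which is an interval, produces uncountably many distinct $A_{x,k}$, proving (i).

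For (ii), I first establish $\Omega(2^f) = \overline{Per(2^f)} = CR(2^f)$. Since $f$ is Morse-Smale on $S^1$ it has the shadowing property, so by the Good--Fern\'andez theorem $2^f$ inherits shadowing; a classical result then yields $CR(g) = \overline{Per(g)}$ for any continuous map $g$ on a compact metric space with shadowing, and combined with $\overline{Per(2^f)} \subset \Omega(2^f) \subset CR(2^f)$ this gives the triple equality. To show $R(2^f) \neq \Omega(2^f)$ I will exhibit $A \in \Omega(2^f) \setminus R(2^f)$: pick any non-periodic $x \in I$ and set $A = \{x, p, q\}$. Taking $k = jn$ with $j \to \infty$, the periodic sets $A_{x,jn}$ from part (i) satisfy $A_{x,jn} \to A$ in Hausdorff, since $f^{jn}$ fixes $p, q$ and contracts strongly toward them, so the orbit $\{f^{jnm}(x)\}_{m \in \mathbb{Z}}$ concentrates at $\{p, q\}$ as $j\to\infty$ with only $x$ (the $m=0$ term) in between; hence $A \in \overline{Per(2^f)}$. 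On the other hand, along $N = jn$ one has $f^N(A) = \{f^{jn}(x), p, q\} \to \{p, q\}$, and $d_H(\{p,q\}, A) = d(x, \{p,q\}) > 0$, so $A$ is not recurrent.

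For (iii), $C(S^1) \subset 2^{S^1}$ is closed (Hausdorff limits of continua are continua), $2^f$-invariant (homeomorphisms preserve connectedness and the number of components), and a proper subset, so both $C(S^1)$ and its complement are nonempty $2^f$-invariant sets. Every orbit of $2^f$ is confined to one of them, so no orbit is dense and $2^f$ is not transitive. The main obstacle I foresee is the inclusion $CR(2^f) \subset \overline{Per(2^f)}$: while shadowing of $2^f$ transfers easily from $f$ via Good--Fern\'andez, promoting chain-recurrent points to nearby periodic points requires a careful periodic-pseudo-orbit shadowing argument on the hyperspace.
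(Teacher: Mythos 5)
Your part (i) and your non-recurrent non-wandering example in (ii) are essentially what the paper does (the paper takes $K=Per(f)\cup\{x\}$ and exhibits $K_n=K\cup\{f^{-n}(x)\}$ to witness $K\in\Omega(2^f)$; your approximation of $\{x,p,q\}$ by the periodic sets $A_{x,jn}$ works equally well). But there are two genuine gaps.

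First, the step you flag as ``the main obstacle'' is a real hole, not a technicality: it is false that shadowing alone gives $CR(g)=\overline{Per(g)}$ on a compact metric space. Shadowing gives $CR(g)=\Omega(g)$ (this is the Aoki--Hiraide result the paper cites), but it does not let you close pseudo-orbits into genuine periodic orbits --- odometers, for instance, have shadowing, are chain recurrent everywhere, and have no periodic points at all. So the inclusion $\Omega(2^f)\subset\overline{Per(2^f)}$ must be proved by hand, and this is where the bulk of the paper's proof lives: one first shows that if $K\in\Omega(2^f)$ meets an open arc $(p_i,p_{i+1})$ between consecutive periodic points of $f$ then both endpoints $p_i,p_{i+1}$ lie in $K$ (via a uniform-continuity and stable-manifold argument applied to approximating sequences $A_m\to K$, $f^{N_m}(A_m)\to K$), and then approximates $K$ within $\varepsilon$ by the $2^f$-periodic set $Z=\overline{\bigcup_{j} f^{jLN}(X)}\cup\bigl(K\cap Per(f)\bigr)$, where $X$ is a finite $\varepsilon$-net of $K$ inside those arcs and $N$ is chosen so that $f^{\pm LN}(X)$ already lies $\varepsilon$-close to the periodic points. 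Your proposal contains no substitute for this construction, and without it the triple equality in (ii) is unproved.

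Second, your argument for (iii) does not work: splitting $2^{S^1}$ into the closed invariant set $C(S^1)$ and its complement does not preclude a dense orbit, because $C(S^1)$ has empty interior in $2^{S^1}$ (finite sets are dense in the hyperspace), so its complement is an open \emph{dense} invariant set, and an orbit confined to a dense set can still be dense. The paper instead invokes Peris's theorem that $2^f$ is transitive if and only if $f$ is weakly mixing, which a Morse--Smale circle diffeomorphism is not; alternatively, non-transitivity follows once one knows from (ii) that $\Omega(2^f)=\overline{Per(2^f)}$ is a proper subset of $2^{S^1}$.
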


\begin{proof}

\begin{itemize}
\item [i.] Fix an arbitrary number $k \in \mathbb{N}$. For all $x \in S^1 \setminus Per(f)$, we have the set $\Lambda_k=\overline{ \cup_{i \in \mathbb{Z}}\{f^{ki}(x)\}}$ is a $k$-periodic point of $2^f$.
\item [ii.] Without loss of generality, we can assume that the diffeomorphism $f$ preserves orientation and therefore all periodic points have the same period (if $f$ is orientation reversing, the proof is analogous). Set $L$ the period. For $x, y \in S^1$ let $[x,y]$ be all points of $S^1$ lying in clockwise sense between $x$ and $y$ ($[x,y]=\{x\}$ if $x=y$). Let $Per(f)= \{p_1, p_2, ..., p_{j_0}\}$ where $p_{2i-1}$ are attractor points, $p_{2i}$ are repeller points and satisfy the following conditions: for $i=1,2,...,j_0-2$ 
$$W^s(p_i) \cap W^u(p_{i+1}):= W^s(f^L(p_i)) \cap W^u(f^L(p_{i+1}))=(p_i,p_{i+1})$$and 
$$W^u(p_{i+1}) \cap W^s(p_{i+2}):= W^u(f^L(p_{i+1})) \cap W^s(f^L(p_{i+2}))=(p_{i+1},p_{i+2}),$$
and for $i=j_0-1$
$$W^s(p_{j_0-1}) \cap W^u(p_{j_0}): = W^s(f^L(p_{j_0-1})) \cap W^u(f^L(p_{j_0}))= (p_{j_0-1},p_{j_0})$$
and
$$W^u(p_{j_0}) \cap W^s(p_1):= W^u(f^L(p_{j_0})) \cap W^s(f^L(p_1)) =(p_{j_0},p_1).$$
 
First we show that there exists $K \in \Omega(2^f)$ but $K \notin R(2^f)$. For this let $x \in S^1 \setminus Per(f)$ and define
$$
K:= \{p_1,p_2,...,p_{j_0}\} \cup \{x\} \in 2^{S^1}.
$$
Since that $K$ is a homoclinic point of $2^f$, i.e., $$\lim\limits_{n \rightarrow + \infty}f^n(K)= Per(f)=\lim\limits_{n \rightarrow - \infty} f^{n}(K)$$
then there exists $\varepsilon>0$ such that $d_H(K,{2^f}^n(K)) > \varepsilon >0$ for all $n \in \mathbb{N}$ and so we have that $K$ is not a recurrent point of $2^f$. On the other hand, for each $n \in \mathbb{N}$, let 
$$
K_n:= K \cup \{f^{-n}(x)\} \in 2^{S^1}.
$$
Then, we obtain 
$$
\lim\limits_{n \rightarrow \infty}K_n = K \hspace{0.2cm} \textrm{and} \hspace{0.2cm} \lim\limits_{n \rightarrow \infty}f^{n}(K_n) = K,
$$
that is, $K$ is a non-wandering point of $2^f$.

Now, we will show the first equality. For this, first we claim if $K \in  \Omega(2^f)$ and $K \cap (p_i,p_{i+1}) \neq \emptyset$ then $p_i, p_{i+1} \in K$. For this, we will suppose that $p_i$ is an attractor and $p_{i+1}$ is a repeller. The proof in the other case is analogous.

In fact, suppose that there exists $x \in K \cap (p_i,p_{i+1}) $. Since that $K \in \Omega(2^f)$, there exist two sequences $\{A_m\}_{m \in \mathbb{N}}$ and $\{f^{N_m}(A_m)\}_{m\in \mathbb{N}}$  in $2^{S^1}$ with $N_m < N_{m+1}$ such that

\begin{equation}
\lim_{m \rightarrow \infty} A_m= K \hspace{0.3cm} \textrm{and}  \hspace{0.3cm}  \lim_{m \rightarrow \infty} f^{N_m} (A_m)= K .
\end{equation}
Therefore, there exist two sequences  $\{a_m\}_{m \geq 1}$ and $\{f^{N_m}(a_m)\}_{m \geq 1}$ in $S^1$ with the following properties: $\lim_{m \rightarrow \infty} a_m=x$  and  the accumulation points of $\{f^{N_m} (a_m)\}_{m \geq 1}$ is contained in $K$. Without loss of generality, we can assume that there exits a sequence $t_m \geq 1$ satisfying $N_m=Lt_m+j$ with $0 \leq j \leq L-1$ and

$$\lim_{m \rightarrow \infty} f^{N_m}(a_m)= \lim_{m \rightarrow \infty} f^{Lt_m+j}(a_m) \in K .$$

Since $a_m \in (p_i, p_{i+1})$ then $f^j(a_m) \in (f^j(p_i), f^j(p_{i+1}))$ and 

$$f^{Lt_m+j}(a_m) \in (f^{Lt_m}(f^j(p_i)), f^{Lt_m}(f^j(p_{i+1})))= (f^j(p_i), f^j(p_{i+1}))$$

for all $m \geq m_0$.

\begin{notation}
$$\lim_{m \rightarrow \infty} f^{N_m}(a_m)= f^j(p_i) \in K .$$ 
\end{notation}

\begin{proof}
Given $\varepsilon>0$, there exists $T>0$ such that $d(f^{LT+j}(x),f^j(p_i))<\varepsilon/2$ and, using the invariance of the stable manifold,  for any $t>T$ then $d(f^{Lt+j}(x),f^j(p_i))<\varepsilon/2$. By uniform continuity, there exists $\delta>0$ such that if $d(y,x)<\delta$ then $d(f^i(x),f^i(y))<\varepsilon/2$ if $i=0,\dots, LT+j$. Therefore, for such $y$ $d(f^{Lt+j}(y),f^j(p_i))<\varepsilon$ if $t>T$.

Now, let $m_0$ such that if $m>m_0$ then $d(x,a_m)<\delta$. Since $N_m\to \infty$ then there exists $m_1>m_0$ such that $N_m=Lt+j\geq LT+j$ if $m>m_1$. Thus, if $m>m_1$ then $d(f^{N_m}(a_m),f^ j(p_i))=d(f^{Lt+j}(a_m),f^j(p_i))<\varepsilon$. The claim follows.
\end{proof}



If $j=0$ then we are done. Otherwise, as $f^j(p_i) \in K$ by $(1)$ we have there exists a sequence $\hat{a}_m \in A_m$ such that $\lim_{m \rightarrow \infty} \hat{a}_m= f^j(p_i)$ and the accumulation points of $\{f^{N_m} (\hat{a}_m)\}_{m \geq 1}$ is contained in $K$. 

\begin{notation}
$$\lim_{m \rightarrow \infty} f^{N_m}(\hat{a}_m)= f^{2j}(p_i) \in K .$$ 
\end{notation}

\begin{proof}
Given $\varepsilon>0$, using the invariance of the stable manifold of $f^j(p_i)$, we know that for $t\geq 0$, we have
$$f^{Lt+j}((f^j(p_i)-\varepsilon,f^j(p_i)+\varepsilon))\subset (f^{2j}(p_i)-\varepsilon ,f^{2j}(p_i)-\varepsilon).$$
There exists $m_0$ such that if $m>m_0$ then $\hat{a}_m\in (f^j(p_i)-\varepsilon,f^j(p_i)+\varepsilon)$. Hence, if $m>m_0$ then
$$f^{N_m}(\hat{a}_m)=f^{Lt_m+j}(\hat{a}_m)\in (f^{2j}(p_i)-\varepsilon , f^{2j}(p_i)-\varepsilon).$$
\end{proof}







Following the same argument we have that
$$\{f^j(p_i), f^{2j}(p_i),..., f^{Lj}(p_i)=p_i \} \subset K.$$ 
To prove that $p_{i+1} \in K$, we use the same argument but with $m$ negative.

Consider $K \in \Omega(2^f)$ and $\varepsilon >0$. Let $\{l_1,\dots,l_s\}=\{i;K\cap (p_i,p_{i+1})\neq\emptyset\}$. For each nonempty set $K \cap (p_i,p_{i+1})$ there exist $N_i$ elements $x_r^i$ satisfying $K \cap (p_i,p_{i+1}) \subset \cup_{r \leq N_i} B(x_r^i, \varepsilon)$ with $K \cap B(x_r^i, \varepsilon) \neq \emptyset$. Thus we obtain a finite set
$$
X= \{x_1^{l_1},...,x_{N_1}^{l_1} \} \cup ... \cup \{x_1^{l_s},...,x_{N_{l_s}}^{l_s} \}
$$
and $N \geq 1$ such that $f^{Ln}(x), f^{-Ln}(x) \in \bigcup_{i \in \{l_1,...,l_s\}} B(p_i, \varepsilon)$ for all $x \in X$ and $n \geq N$. Finally, we consider $Z= \overline{\cup_{j \in \mathbb{Z}} f^{jLN}(X)} \cup [K \cap Per(f)]$. Thus
$$
f^{LN}(Z)= Z \hspace{0.2cm} \textrm{and} \hspace{0.2cm} d_H(Z, K) < \varepsilon.
$$
Thus, we have the first equality $\Omega(2^f)=\overline{Per(2^f)}$. If $f$ is orientation reversing the proof is analogous. The second equality follows from Theorem $6$ in \cite{FG} and Theorem 3.1.2 in \cite{Aoki-Hiraide}.
\item [iii] Follows from  Theorem 2.1 in \cite{Peris}.
\end{itemize}
\end{proof}

The dynamics of the induce map $C(f)$ is completely different than the induced map $2^f$ as we will show. In fact, we show that the non-wandering set of its induced continuum map $C(f)$ is a finite number of periodic points. The Lemma \ref{Theorem-nowandering-finite} will be used to prove Theorem B in Section \ref{Section-5}.

\begin{lemma}
\label{Theorem-nowandering-finite}
Let $f: S^1 \rightarrow S^1$ be a Morse-Smale diffeomorphism then
\begin{itemize}
\item [i.] If $f$ preserves orientation and all periodic points of $f$ have period $N$, then $$\Omega(C(f))= Per_N(C(f)) \cup \{S^1\};$$
\item [ii.] If $f$ reverses orientation, then $$\Omega(C(f))= Per_2(C(f)) \cup Fix(C(f)).$$
\end{itemize}
\end{lemma}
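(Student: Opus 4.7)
The plan is to prove both parts simultaneously by reducing to a common setup. Set $g := f^N$ in case (i) and $g := f^2$ in case (ii); in either setting $g$ is an orientation-preserving Morse--Smale diffeomorphism of $S^1$ with $Fix(g) = Per(f)$, so every periodic point of $g$ is fixed. Both right-hand sides in the lemma equal $Fix(C(g)) \cup \{S^1\}$: in case (i) since $Fix(C(f^N)) = Per_N(C(f))$, and in case (ii) since $Fix(C(f^2)) = Per_2(C(f)) \supseteq Fix(C(f)) \ni S^1$. So it suffices to show
\[
\Omega(C(f)) = Fix(C(g)) \cup \{S^1\}.
\]
The inclusion $\supseteq$ is immediate: $S^1$ is fixed by $C(f)$, and any $K$ with $g(K) = K$ is periodic under $C(f)$ (of period dividing $N$ or $2$), hence non-wandering.

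For the reverse inclusion, take $K \in \Omega(C(f))$ with $K \neq S^1$ and show $g(K) = K$. If $K = \{x\}$ is a singleton, then non-wandering of $K$ in $C(f)$ forces $x \in \Omega(f) = Per(f) = Fix(g)$. The main case is $K = [a, b]$, a proper arc. Since $g$ is orientation-preserving and fixes every point of $Per(f)$, having $a, b \in Per(f)$ already gives $g(K) = K$; so I suppose for contradiction that $a \notin Per(f)$. Then $a \in (p_i, p_{i+1})$ for consecutive periodic points, and this open arc is $g$-invariant with one endpoint an attractor $p_\alpha$ to which every point of the arc converges under $g^m$. Writing $n = mN + r$ in case (i), respectively $n = 2m + r$ in case (ii), and using that $g$ fixes every periodic point, I obtain $f^n(a) \to f^r(p_\alpha)$ along each residue class $r$, and similarly $f^n(b) \to f^r(p_\beta)$ where $p_\beta$ is the attractor associated to the arc containing $b$.

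Consequently, along each residue class the iterates $f^{mN+r}(K)$ converge in Hausdorff metric to a continuum $L_r$ whose endpoints lie in $Per(f)$; the periodic points lying in the interior of $K$ are fixed by $g$ and persist in the limit. In particular $L_r \neq K$ for each $r$, because $a \notin Per(f)$, so $\delta := \min_r d_H(K, L_r) > 0$. To conclude that $K$ is wandering in $C(f)$, I pick $\epsilon < \delta/3$ and then choose $N_0$ and $\eta > 0$ so that the convergence $f^n(K') \in B_H(L_{n \bmod N}, \epsilon)$ holds uniformly over $K' \in B_H(K, \eta)$ for every $n \geq N_0$. The uniformity follows from uniform attraction on compact subsets of the stable manifolds of $p_\alpha$ and $p_\beta$, together with the observation that for $\eta$ sufficiently small every connected $K' \in B_H(K, \eta)$ contains exactly the same interior periodic points as $K$ (if $K$ contains $p_j$ in its interior then $K'$ has points on both sides of $p_j$, and connectedness forces $p_j \in K'$). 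For the finitely many iterates $n < N_0$, continuity of $C(f)^n$ together with $f^n(K) \neq K$ (which holds since $f^n(a) \neq a$) allows a further shrinkage of $\eta$ so that $C(f)^n(B_H(K, \eta)) \cap B_H(K, \eta) = \emptyset$. The resulting neighborhood is then disjoint from all its positive iterates under $C(f)$, contradicting $K \in \Omega(C(f))$. The main obstacle is establishing the uniform convergence over a Hausdorff neighborhood of $K$, especially when $K$ contains an interior repeller of $g$ with endpoints on its two sides (so some $L_r$ equals $S^1$); in that case the needed separation is still provided by $d_H(K, S^1) > 0$ since $K \neq S^1$.
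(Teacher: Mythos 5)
Your reduction to $g=f^N$ (resp.\ $g=f^2$), the treatment of singletons and of $S^1$, and the overall strategy (a non-periodic endpoint of the arc forces $K$ to be wandering) are all sound. The gap is in the key step: ``choose $N_0$ and $\eta>0$ so that $f^n(K')\in B_H(L_{n\bmod N},\epsilon)$ holds uniformly over $K'\in B_H(K,\eta)$ for every $n\geq N_0$.'' This is false whenever the other endpoint $b$ of $K$ is a \emph{repelling} periodic point of $g$, a case your justification (uniform attraction on compact subsets of stable manifolds, plus persistence of \emph{interior} periodic points) does not cover. Concretely, let $f$ be orientation preserving with $Fix(f)=\{p_1,p_2\}$, $p_1$ attracting and $p_2$ repelling, and let $K=[a,p_2]\subset\overline{(p_1,p_2)}$ with $a$ not fixed; then $L_0=\lim_n f^n(K)=[p_1,p_2]$, but for $K'_\delta=K\cup[p_2,b_\delta]$ with $b_\delta$ at distance $\delta$ from $p_2$ on the far side one has $d_H(K,K'_\delta)=\delta$ while $f^n(K'_\delta)\to S^1\neq L_0$ as $n\to\infty$. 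Since points near a repeller escape arbitrarily slowly, no choice of $N_0$ and $\eta$ makes your containment hold for all $K'\in B_H(K,\eta)$ and all $n\geq N_0$; the one difficulty you do flag (some $L_r$ equal to $S^1$) is a different issue and does not repair this one. The wandering conclusion is still true for such $K$, but it comes from the controlled endpoint alone ($f^n(K'_\delta)\ni f^n(a)\to p_1$ and $d(p_1,K)>0$), not from convergence of the whole continuum to $L_r$.

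This is precisely where the paper's argument is leaner: rather than controlling the forward images of an entire Hausdorff neighborhood of $K$, it observes that Hausdorff-close proper arcs have close, correspondingly ordered endpoints (when the approximation is finer than the length of $S^1\setminus K$), so $K=[a,b]\in\Omega(C(f))$ yields $a_n\to a$ with $f^{N_n}(a_n)\to a$, i.e.\ the endpoint $a$ is non-wandering for $f$. Since every non-periodic point of a Morse--Smale circle diffeomorphism is wandering (its orbit moves monotonically toward the attracting end of its gap and never returns), this forces $a\in Per(f)$, and likewise $b\in Per(f)$, so $K$ is a $D$-arc. If you replace your uniform-convergence step by this endpoint argument --- or restrict your uniform estimate to the single non-periodic endpoint $a'$, for which uniform attraction on $\overline{B(a,\eta)}\subset(p_i,p_{i+1})$ genuinely holds, followed by a short case analysis on whether the limiting attractor lies in $K$ --- the proof closes.
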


\begin{proof}
	
\begin{itemize}
		
\item[i.] Let $f: S^1 \rightarrow S^1$ be orientation preserving Morse-Smale diffeomorphism such that every point in the non-wandering set is a fixed point, i.e, $\Omega(f)=Fix(f)=\{p_1,\dots,p_k\}$.  We say that a subset $J\subset S^1$ is a $D$-arc if it is homeomorphic to an interval (possibly degenerated) and the boundary of $J$ is contained in the set of fixed points, which we denote by $\partial J\subset Fix(f)$. Note that, if $f$ preserves orientation and $J \in C(S^1)$ then $J=[a,b]$ for some $a,b \in S^1$ and $f([a,b])=[f(a),f(b)]$. Therefore, we have that $Fix(C(f))= \{J:  \textrm{$J$ is a $D$- arc}\} \cup \{S^1\}$.
		
We will show that $\Omega(C(f))= Fix(C(f))$, i.e, the non-wandering set consists of finitely many fixed points. Indeed, suppose that there is an interval $I=[x,y]$ in $\Omega(C(f))$ such that $I$ is not a fixed point of $C(f)$. Without loss of generality we can suppose that $x$ is not a fixed point of $f$ and therefore there is $p_i \in Fix(f)$ such that $x \in W^s(p_i)$ (see definition in \ref{section-preliminares}). Since that $I \in \Omega(C(f))$, we have that for every  $n \in \mathbb{N}$, there are $N_n \geq 1$ and an interval $J_n=[a_n,b_n]$ such that 
$$d_H(I,J_n)< \frac{1}{n} \\\  \\\ \textrm{   and   } \\\ \\\ d_H(I, C(f)^{N_n}(J_n)) < \frac{1}{n}.$$ 
This implies that $\lim\limits_{n \rightarrow \infty} a_n = x$ and $\lim\limits_{n \rightarrow \infty} f^{N_n}(a_n)= x$. Since $f$ is a continuous map, for $\varepsilon =\min\{d(x,f(x)), d(f(x),p_i)\}/2$, there is $ \delta>0$ small enough such that $f(B(x, \delta))\subset B(f(x),\varepsilon)$ and $B(x,\delta) \cap B(f(x), \varepsilon) = \emptyset$. Besides that, there is $n_0 \geq 1$ such that $a_{n_0}$ and $f^{N_{n_0}}(a_{n_0})$ belong to $B(x, \delta)$. As $f$ preserves orientation we have that $f^n(a_{n_0}) \in [p_i,f(x)] \cup B(f(x),\varepsilon)$ for every $n \geq 1$ which is a contradiction. This contradiction yields the proof.
		
Now, suppose that $f$ is orientation preserving and the non-wandering set consists of a finite number of periodic points with the same period $N>1$. In this case we say that $J$ is a $D$-arc of $f^N$ if and only if $J$ is homeomorphic to an interval (possibly degenerated) and $\partial J \subset Fix(f^N)$. Therefore, $Per_N(C(f))=\{J: \textrm{$J$ is a $D$-arc of $f^N$}\}$. As above, following the same arguments, we can prove that $\Omega(C(f))= \{S^1\} \cup \{J: \textrm{$J$ is a $D$-arc of $f^N$}\}$, i.e., the set of non-wandering points of its induced continuum map $C(f)$ consists of finitely many periodic points (all with the same period $N$) and a fixed point. 

\item[ii.] Suppose that $f$ is orientation reversing, then all periodic points have period one or two. In this case we introduce three kinds of arcs: $D$-arc, $D$-mix-arc and $D^*$-mix-arc. We defined $D$-arc above in item i. We say that $J \subset S^1$ is a $D^*$-mix-arc if it is homeomorphic to an interval and $\partial J=\{p,f(p)\}$ for some $p \in Per_2(f)$. And, we say that $J \subset S^1$ is a $D$-mix-arc if it is homeomorphic to an interval (possibly degenerated) and $\partial J=\{p_1,p_2\} \subset Per(f)$ with $\mathcal{O}(p_1) \neq \mathcal{O}(p_2)$. Therefore, we have that 
$$
Fix(C(f))= \{ \textrm{degenerated D-arc}\} \cup \{ \textrm{$D^*$-mix-arc}\} \cup \{S^1\}
$$
and 
$$
Per_2(C(f))=\{ \textrm{no degenerated D-arc}\} \cup \{ \textrm{D-mix-arc}\}. 
$$
As item i. we can prove that $\Omega(C(f))= Fix(C(f)) \bigcup Per_2(C(f))$. 
\end{itemize}
\end{proof}

We do not have enough tools to show item ii. of Proposition \ref{dyn-circle-2^f} for induced map $C(f)$ of Morse-Smale diffeomorphisms defined on manifolds $M^n$, with $n \geq 2$. Indeed, in the last years the problem of topological classification of this group of diffeomorphisms continues to be object to studied, see \cite{Bonnati-GMP}, \cite{G-Z-M} and \cite{Z-M}. However, we show that if $f$ is a North Pole-South Pole diffeomorphism, then item ii. holds. The proof we present to Proposition \ref{t.PNPS} follows, with slight changes, the proof given for Proposition \ref{dyn-circle-2^f}. The main difference is the construction of the continuum in the non-wandering set of the induced map $C(f)$.


 
\begin{proposition}
\label{t.PNPS}
If $f:M^n\to M^n$ is a Morse-Smale diffeomorphism with only two fixed points. Then the following properties hold:
\begin{itemize}
\item [i.]$C(f)$ has uncountably many periodic points of each period $N\geq 1$;
\item [ii.] C(f) is not transitive;
\item [iii.]$R(C(f)) \neq \Omega(C(f))$;
\item [iv.] $\Omega(C(f))=\overline{Per(C(f))}$;
\end{itemize}
\end{proposition}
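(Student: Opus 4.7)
The plan is to follow the architecture of the proof of Proposition \ref{dyn-circle-2^f}, with the central modification that every finite union of points (which on $S^1$ is automatically a valid element of $2^{S^1}$) must be replaced by a continuum. In the North--South setting $Per(f)=\{p,q\}$ is just a sink--source pair and is disconnected, so the natural replacement is an arc $\gamma$ joining $q$ to $p$. Such arcs exist through any prescribed point of $M^n\setminus\{p,q\}$ because $M^n$ is a compact, connected, and locally path-connected manifold, and I would use them as the ``connected skeletons'' carrying the orbits one wants to track.

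For item (i), fix $N\geq 1$ and an arc $\gamma\subset M^n$ from $q$ to $p$, and set
$$
K_{\gamma,N}:=\overline{\bigcup_{k\in\mathbb{Z}} f^{kN}(\gamma)}.
$$
Each iterate $f^{kN}(\gamma)$ is an arc from $q$ to $p$, so they all share the endpoints $p$ and $q$; the union is therefore connected, and $K_{\gamma,N}$ is a continuum with $C(f)^N(K_{\gamma,N})=K_{\gamma,N}$. Varying $\gamma$ over a transversal to the unstable direction at $q$ produces uncountably many distinct such continua of period dividing $N$, and a generic choice makes the least period equal to $N$. For item (ii), the singleton $\{p\}\in C(M)$ is fixed by $C(f)$ and, by hyperbolic contractivity at $p$, one has $f(\overline{B(p,\varepsilon_0)})\subset B(p,\varepsilon_0)$ for $\varepsilon_0>0$ small; hence $B_H(\{p\},\varepsilon_0)$ is a proper, open, forward-invariant subset of $C(M)$ for $C(f)$, which obstructs transitivity.

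For item (iii), pick $x\in M^n\setminus\{p,q\}$, let $\gamma$ be an arc from $q$ through $x$ to $p$, and set $K:=\gamma$. Then $K\notin R(C(f))$ because the arcs $f^n(\gamma)$, all of them joining $q$ to $p$, have interiors contained in $W^s(p)$ and collapse toward $p$ in Hausdorff metric, so $d_H(C(f)^n(K),K)$ stays bounded below. To place $K\in\Omega(C(f))$ I would set $K_n:=\gamma\cup\beta_n$, where $\beta_n\subset f^{-n}(\gamma|_{[q,x]})$ is the arc from $q$ to $f^{-n}(x)$; since $f^{-n}(x)\to q$ we have $\beta_n\to\{q\}$ and hence $K_n\to K$, while by construction $f^n(\beta_n)=\gamma|_{[q,x]}\subset K$, so with a careful choice of $\gamma$ the remaining contribution $f^n(\gamma)$ can be made to contract into $K$ and one obtains $f^n(K_n)\to K$. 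For item (iv), the equality $\Omega(C(f))=\overline{Per(C(f))}$ is then obtained by the saturation-and-density argument of Proposition \ref{dyn-circle-2^f}(ii): any non-periodic $K\in\Omega(C(f))$ necessarily contains both $p$ and $q$ (otherwise the forward iterates $f^{m_n}(K_n)$ would converge to $\{p\}$ or $\{q\}$ rather than to $K$), so $K$ is approximated up to Hausdorff error $\varepsilon$ by a finite union of arcs from $q$ to $p$ passing through a finite $\varepsilon$-net of $K$, and the $f^N$-saturation of this union is a periodic continuum by the construction of (i).

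The main obstacle is the $\Omega$-approximation step of (iii) and (iv). On $S^1$, the convergence $f^n(K)\to Per(f)$ made the identity $f^n(K_n)=f^n(K)\cup\{x\}\to Per(f)\cup\{x\}=K$ immediate. In the $n$-dimensional NPS setting, the Hausdorff limit of $f^n(\gamma)$ is not $\{p,q\}$ and depends delicately on how $\gamma$ meets the stable and unstable manifolds at the two fixed points, so the residual term $f^n(\gamma)$ in $f^n(K_n)$ need not converge to a subset of $K$. The technical core of the proof is therefore to choose $\gamma$, and the auxiliary arc $\beta_n$ inside $f^{-n}(\gamma|_{[q,x]})$, so that this residual contribution is absorbed by $K=\gamma$ in the Hausdorff limit.
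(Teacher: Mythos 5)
Your items (i) and (ii) go through (for (ii) your forward-invariant-ball argument is a self-contained substitute for the paper's citation of Fedeli), but the core of (iii) contains a step that fails, and it is precisely the step you flag as the ``main obstacle.'' You take $K=\gamma$ to be an arc from $q$ to $p$ and claim the iterates $f^n(\gamma)$ ``collapse toward $p$ in Hausdorff metric.'' They cannot: every $f^n(\gamma)$ is an arc containing both $q$ and $p$, so its subsequential Hausdorff limits are invariant continua containing both fixed points, never the singleton $\{p\}$. Worse, for a suitable $\gamma$ (e.g.\ a ray in linearizing coordinates of a linear North--South map) the arc is literally $f$-invariant, so $K=\gamma$ is a fixed point of $C(f)$ and in particular recurrent; the non-recurrence claim is false without further hypotheses on $\gamma$. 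The paper's resolution is exactly the one that dissolves your ``residual term'' worry: first saturate, i.e.\ set $P=\overline{\bigcup_{j\in\mathbb{Z}}f^j(\gamma)}$, which is a \emph{fixed} point of $C(f)$ containing $p$ and $q$, and then attach a whisker $\beta$ with $P\cap\beta=\{x\}$ and $f^n(\beta)\cap f^m(\beta)=\emptyset$ for $n\neq m$. Then $C(f)^n(K)=P\cup f^n(\beta)\to P\neq K$ (so $K=P\cup\beta$ is homoclinic, hence not recurrent), while $K_n=K\cup f^{-n}(\beta)$ gives $C(f)^n(K_n)=P\cup\beta\cup f^n(\beta)\to K$, with no uncontrolled residue.

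For (iv) your approximating continuum is indeed periodic, but you never verify the only nontrivial point: that its full saturation stays within Hausdorff distance $\varepsilon$ of $K$. The iterates $f^{jN}$ of a long arc from $q$ to $p$ sweep through the fundamental domain between the two fixed points and have no reason to remain in $V(K,\varepsilon)$. The paper's proof first shows that any nondegenerate $K\in\Omega(C(f))$ must contain both $p$ and $q$, then cuts $K$ down to the compact core $\hat K=K\cap[M\setminus(W^s_{\varepsilon}(p)\cup W^u_{\varepsilon}(q))]$ and chooses $N$ so large that $f^{n}(\hat K)\subset W^s_{\varepsilon}(p)$ and $f^{-n}(\hat K)\subset W^u_{\varepsilon}(q)$ for $n\geq N$; since $p,q\in K$, these neighborhoods lie in $V(K,\varepsilon)$, so the $f^N$-saturation of $\hat K$ together with short connecting arcs is both periodic and $\varepsilon$-close to $K$. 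Without this control of where the iterates land, the density of $Per(C(f))$ in $\Omega(C(f))$ does not follow from your sketch.
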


\begin{proof}
Suppose that $p$ is an attracting fixed point and $q$ is a repeller fixed point and $M=S^2$.
\begin{itemize}
\item [i.]  Fix $N \geq 1$ and $x \in M \setminus \{p,q\}$. We consider an arc $\gamma$ between $x$ and $f^N(x)$ such that $\gamma \cap \{f^j(x): j= \pm 1, \pm 2, \ldots, \pm(n-1) \}= \emptyset$. So
$$
K= \overline{\bigcup_{j \in \mathbb{Z}} f^{jN}(\gamma)} \in C(M)
$$
is a $N$-periodic point of $C(f)$. 
\item [ii.] Follows of Theorem $3.1$ in \cite{fedeli}.
\item [iii.] Let $x \in M \setminus \{p,q\}$ and one arc $\gamma$ between $x$ and $f(x)$. Consider the fixed point 
$$
P= \overline{\bigcup_{j \in \mathbb{Z}}f^j(\gamma)}
$$
and note that $p,q \in P$. Besides that, we consider an arc $\beta$ such that one of the end point is $x$, $P \cap \beta =\{x\}$ and $f^n(\beta) \cap f^m(\beta) = \emptyset$ for all $n \neq m \in \mathbb{Z}$. Set
$$
K= P \cup \beta.
$$
Since $\lim_{n \rightarrow \pm \infty} C(f)^n(K)= P$ then $K$ is a homoclinic point and therefore there is $\eta>0$ such that
$$
d_H(K,C(f)^n(K)) \geq \eta \hspace{0.2cm} \textrm{for all} \hspace{0.2cm} n \in \mathbb{Z} .
$$
Thus $K$ is not a recurrent point.
On the other hand, for each $n \in \mathbb{N}$, let 
$$
K_n= K \cup \{f^{-n}(\beta)\} \in C(S^2).
$$
Then, we obtain 
$$
K_n \rightarrow K \hspace{0.2cm} \textrm{and} \hspace{0.2cm} C(f)^{n}(K_n) \rightarrow K\hspace{0.2cm} \textrm{when} \hspace{0.2cm} n \rightarrow \infty.
$$
Thus, $K$ is a non-wandering point of $C(f)$.

\item [iv.] First, note that if $K \in \Omega(C(f))$ is a non-degenerate compact and connected set, then $p,q \in K$. In fact, suppose that $p \not \in K$. Then there exists $\varepsilon>0$ such that $ p \not \in \overline{V(K, \varepsilon)}$. Since $\overline{V(K, \varepsilon)}$ is a compact and connected set we have that for $\delta>0$ small enough there exists $N_0 \in \mathbb{N}$ such that 
$$C(f)^{-N}(\overline{V(K, \varepsilon)}) \subset B_H(\{q\}, \delta) \hspace{0,2cm} \textrm{for } \hspace{0,2cm} n \geq N_0.$$  
Let $\varepsilon_1 < \min \{\varepsilon, \delta\}$ small enough such that  
$$B_H(f^{i}(K), \varepsilon_1) \cap B_H(f^{j}(K), \varepsilon_1)= \emptyset$$ for $i \neq j \in \{0,-1, \ldots,-N_0 \}$ and 
$$B_H(f^{-N_0}(K), \varepsilon_1) \subset B_H(\{q\}, \delta).$$
Since $C(f)$ is continuous, there exists $\varepsilon_2< \varepsilon_1$ with 
$$f^{-N_0}(B_H(K, \varepsilon_2)) \subset B_H(f^{-N_0}(K), \varepsilon_1) \subset B_H(\{q\}, \delta).$$
By construction, $B_H(K, \varepsilon_2) \cap C(f)^{-n}(B_H(K, \varepsilon_2))= \emptyset$ for all $n \geq 1$ and that is a contradiction. The cases with $q \not \in K$ and $p,q \not \in K$ are analogous.

Now, let $K \in \Omega(C(f))$ and $\varepsilon>0$ small enough such that
$$
f(\overline{W^s_{\varepsilon}(p)}) \subset W^s_{\varepsilon}(p) \hspace{0.2cm} \textrm{and} \hspace{0.2cm} f^{-1}(\overline{W^u_{\varepsilon}(q)}) \subset W^u_{\varepsilon}(q). 
$$
As $W^s_{\varepsilon}(p)$ and $W^u_{\varepsilon}(q)$ are an open sets and $K$ is compact, then
$$
\hat{K}=K \cap   [ M \setminus W^s_{\varepsilon}(p) \cup W^u_{\varepsilon}(q) ]
$$
is a compact set. Therefore, there is $N \in \mathbb{N}$ such that
$$
f^n(\hat{K}) \subset W^s_{\varepsilon}(p) \hspace{0.2cm} \textrm{and} \hspace{0.2cm}  f^{-n}(\hat{K}) \subset W^u_{\varepsilon}(q)
$$ 
for all $n \geq N$. If $\hat{K}$ is a connected set, then consider two points, one point $x \in \partial W^s_{\varepsilon}(p) \cap \hat{K}$ and one point $y \in \partial W^u_{\varepsilon}(q) \cap \hat{K}$, and an arc $\gamma$ with end points $x$ and $f^N(y)$. Finally, we consider $P= \overline{\cup_{j \in \mathbb{Z}} f^{Nj}(\hat{K} \cup \gamma)}$. Thus
$$
f^N(P)= P \hspace{0.2cm} \textrm{and} \hspace{0.2cm} d_H(P, K) < \varepsilon.
$$
If $\hat{K}$ is not a connected set, then let $\hat{K}_{\lambda}$ be the connected components of $\hat{K}$. Then, in each component we can consider points $x_{\lambda}$, $y_{\lambda}$ and arcs $\gamma_{\lambda}$ as above. Thus, we consider the periodic point  
$$
P= \overline{\bigcup_{j \in \mathbb{Z}} f^{Nj} (\bigcup_{\lambda \in \Lambda} \hat{K}_{\lambda} \cup \gamma_\lambda)}.
$$ 
Therefore, we have the equality $\Omega(C(f))=\overline{Per(C(f))}$.

\end{itemize}
\end{proof}


In Proposition \ref{t.PNPS} we show that the induced map $C(f)$ of the North Pole-South Pole diffeomorphism has uncountable many homoclinic points in the non-wandering set and each of these elements is accumulated by periodic orbits of $C(f)$. Also, it has homoclinic points which are accumulated by homoclinic points with the same omega limit set. The Morse-Smale diffeomorphism on the torus with four fixed points is another example with the same property. In the following result we give conditions to obtain the same property to Morse-Smale diffeomorphisms:

\begin{proposition}
Let $f: M^n \rightarrow M^n$ ($n \geq 2$) be a Morse-Smale diffeomorphism with an attractor and repeller points $p,q$, and $ x \in W^s(p) \cap W^u(q) \neq \emptyset$. If there is an arc $\gamma \subset W^s(p) \cap W^u(q)$ such that the end points of $\gamma$ are $x$ and $f(x)$ then the induced continuum map $C(f)$ has uncountably many homoclinic points in $\Omega(C(f))$. 
\end{proposition}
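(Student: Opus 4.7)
The plan is to generalize the homoclinic construction from Proposition \ref{t.PNPS}, exploiting the openness of $W^s(p) \cap W^u(q)$ in dimension $n \geq 2$ to replace a single homoclinic continuum by an uncountable family. The backbone of each element of the family will be the fixed continuum
$$
P := \overline{\bigcup_{j \in \Z} f^j(\gamma)} .
$$
It is connected because consecutive iterates $f^j(\gamma)$ and $f^{j+1}(\gamma)$ share the point $f^{j+1}(x)$; it satisfies $f(P)=P$; and it contains both $p$ and $q$, since $\gamma \subset W^s(p) \cap W^u(q)$ forces $f^j(\gamma)\to\{p\}$ as $j\to+\infty$ and $f^j(\gamma)\to\{q\}$ as $j\to-\infty$ in the Hausdorff metric.

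Next, because $p$ is an attractor and $q$ a repeller, $W^s(p)$ and $W^u(q)$ are both open in $M$, so I would fix a chart at $x$ identifying a neighborhood with an open ball $B \subset W^s(p)\cap W^u(q)$ disjoint from $\{p,q\}$. Since $f^j(\gamma)$ is trapped in a small neighborhood of $p$ (resp.\ $q$) for large positive (resp.\ negative) $j$, only finitely many iterates can meet $B$, so $P \cap B$ is a finite union of smooth $1$-dimensional arcs. Because $n \geq 2$, the set $B \setminus P$ is open and dense; I would pick $y \in B \setminus P$, a small ball $B' \subset B \setminus P$ around $y$, and an arc $\sigma \subset B$ from $x$ to $y$ meeting $P$ only at $x$ (avoiding a finite union of $1$-dimensional arcs inside an open $n$-ball with $n \geq 2$ is routine). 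For each $z$ on the sphere $\partial B'$, the concatenation $\beta_z := \sigma \cup [y,z]$ is then an arc in $B$ from $x$ to $z$ with $\beta_z \cap P = \{x\}$, and this produces an uncountable family $\{\beta_z\}_{z\in\partial B'}$.

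I would then set $K_z := P \cup \beta_z$. Each $K_z$ is a continuum (two continua joined at $x$); for $z \neq z'$ the point $z \in K_z$ lies outside $P \cup \beta_{z'}$, so the $K_z$'s are pairwise distinct. Since $\beta_z \subset W^s(p) \cap W^u(q)$, one has
$$
C(f)^n(K_z) \;=\; P \cup f^n(\beta_z) \;\longrightarrow\; P \qquad \text{as } n \to \pm\infty,
$$
using $f^n(\beta_z)\to\{p\}\subset P$ as $n\to+\infty$ and $f^n(\beta_z)\to\{q\}\subset P$ as $n\to-\infty$. Hence every $K_z$ is a homoclinic point of the fixed continuum $P$, and in particular $K_z \neq P$. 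To place $K_z$ in $\Omega(C(f))$ I would copy the approximation trick from Proposition \ref{t.PNPS}: the sets $K_{z,n} := K_z \cup f^{-n}(\beta_z)$ satisfy $K_{z,n}\to K_z$ (because $f^{-n}(\beta_z)\to\{q\}\subset K_z$), while $C(f)^n(K_{z,n}) = P \cup f^n(\beta_z) \cup \beta_z \to K_z$.

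The main obstacle is the construction in the second paragraph: producing uncountably many arcs in $W^s(p) \cap W^u(q)$ whose unions with $P$ yield genuinely distinct continua. The hypothesis $n \geq 2$ is indispensable here, since only then does one have enough transverse directions at $x$ to escape the at most $1$-dimensional local trace of $P$; in dimension one this freedom is lost, consistent with the rigidity captured by Theorem A.
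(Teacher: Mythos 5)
Your proposal is correct and follows essentially the same route as the paper: the fixed continuum $P=\overline{\bigcup_{j\in\Z}f^j(\gamma)}$, the homoclinic continua $K=P\cup\beta$ with $\beta\cap P=\{x\}$, and the approximating sequence $K_n=K\cup f^{-n}(\beta)$ to place $K$ in $\Omega(C(f))$. The only difference is that you make explicit the construction of an uncountable, pairwise distinct family of such arcs $\beta_z$ (using that $P$ is nowhere dense in the open set $W^s(p)\cap W^u(q)$ when $n\ge 2$), a point the paper leaves implicit.
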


\begin{proof}
Consider $P= \overline{\bigcup_{j \in \mathbb{Z}} f^j(\gamma)}$ the fixed point of $C(f)$. Then, for every nontrivial arc $\beta \subset W^s(p) \cap W^u(q)$ with end point $x$ and $\beta \cap P=\{x\}$ we have the following set 
$$
K = P \cup \beta
$$
is a homoclinic point of $C(f)$. In fact, 
$$\lim_{n \rightarrow \infty}C(f)^n(K)= \lim_{n \rightarrow -\infty}C(f)^n(K)=P.$$

To show that $K \in \Omega(C(f))$, consider the sequence $K_n= K \cup \{f^{-n}(\beta)\}$ and note that $\lim_{n \rightarrow \infty} K_n=K$ and $\lim_{n \rightarrow \infty} f^n(K_n)=K$ 
\end{proof}


\section{Shadowing of induced Morse-Smale diffeomorphisms}
\label{Section-4}
This section is devoted to study the shadowing property of induced maps $2^f$ and $C(f)$ of Morse-Smale diffeomorphisms defined on a $n$-dimensional compact and connected manifold without boundary. It is well known that this group of diffeomorphisms defined on the circle have the shadowing property, see \cite{Yano-2} and \cite{pilyugim-shad}. And if $n \ge 2$, then $f$ it is structurally stable and therefore it has the shadowing property too, see \cite{pilyugim-shad}. By the follow proposition we have that the induced map $2^f$ inherits the shadowing property:\\

\begin{proposition}
\label{shadow-hypers}
Let $f: M^n \rightarrow M^n$ be a Morse-Smale diffeomorphism then the induced map $2^f$ has the shadowing property.
\end{proposition}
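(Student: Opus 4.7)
The plan is to combine two facts already established in the literature and explicitly cited in the introduction of this paper. First I would observe that every Morse-Smale diffeomorphism $f: M^n \to M^n$ has the shadowing property on the base manifold. For $n=1$ this is due to Yano \cite{Yano-2}, and in fact Pilyugin \cite{pilyugim-shad} records it explicitly for Morse-Smale diffeomorphisms on $S^1$. For $n \geq 2$, I would invoke the structural stability of Morse-Smale diffeomorphisms, which is a classical result listed in Section \ref{Section-2}, together with the theorem of Robinson and Sawada (as formulated in \cite{pilyugim-shad}, \cite{pilyugim-stable}) stating that structurally stable diffeomorphisms have the shadowing property. Thus in every dimension $f$ itself is a shadowing homeomorphism.

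The second ingredient is the transference of shadowing to the hyperspace $2^X$. As recalled in Section \ref{Section-4}, Good and Fern\'andez \cite{FG} proved that a continuous self-map of a compact metric space has the shadowing property if and only if its induced map on $2^X$ does. Combining this equivalence with the first step yields immediately that $2^f$ has the shadowing property, which is exactly the statement of Proposition \ref{shadow-hypers}.

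Since both ingredients are quoted results, there is no real obstacle: the proof is essentially a two-line citation. The only thing to double check is that the hypotheses of the cited results are met, namely that $M^n$ is compact and $f$ is a homeomorphism (both immediate from the Morse-Smale setting), and that the split between the cases $n=1$ and $n\geq 2$ is acknowledged explicitly so that the reader sees why shadowing holds on the base for \emph{every} Morse-Smale diffeomorphism before the Good--Fern\'andez transfer is applied.
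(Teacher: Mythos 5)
Your proposal is correct and follows exactly the paper's argument: shadowing for the base Morse-Smale diffeomorphism (via Pilyugin for $S^1$ when $n=1$, and via structural stability when $n\geq 2$) combined with the Good--Fern\'andez equivalence between shadowing for $f$ and for $2^f$. This is precisely how the paper proves Proposition \ref{shadow-hypers}, down to the same case split and the same cited sources.
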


\begin{proof}
If $n=1$, it follows from Theorem $6$ in \cite{FG} and Theorem $3.1.1$ in \cite{pilyugim-shad}. If $n \ge 2$, it follows from Theorem $6$ in \cite{FG} and Theorem $2.2.5$ and Theorem $2.2.6$ in \cite{pilyugim-shad}.
\end{proof}
Proposition \ref{shadow-hypers} does not hold for the induced map $C(f)$ of Morse-Smale diffeomorphisms on the circle. In fact, the following is the main result of this section.\\

\textbf{Theorem A.}
Let $f: S^1 \rightarrow S^1$ be a Morse-Smale diffeomorphism. Then the continuum map $C(f)$ does not have the shadowing property.

\begin{proof}
Without loss of generality, we can assume that all periodic points of $f$ are fixed (otherwise we pass to some iteration of $f$). First, suppose that $f$ has only two fixed points, an attractor $p$ and a repeller $q$. Furthermore, suppose that $C(f)$ has the shadowing property. Fix $\varepsilon>0$ such that $B_H(\{p\}, \varepsilon) \cap B_H(\{q\}, \varepsilon) \cap B_H(S^1, \varepsilon)= \emptyset$. Then there is $\delta>0$ such that every $\delta$-pseudo orbit $\{x_n\}$ of $C(f)$ is $\varepsilon$-shadowed by real orbit of $C(f)$ and for each $\delta$, we can consider the following $\delta$-pseudo orbit:
$$
x_0=S^1, x_1=J_p^*, x_{-1}=J_q^*
$$ 
$$
x_i=C(f)^i(x_1) , \hspace{0.2cm} \textrm{and} \hspace{0.2cm} x_{-i}=C(f)^{-i}(x_{-1}) \hspace{0.2cm} \textrm{if} \hspace{0.2cm} i \geq 2
$$
where $J_p^*$ is an arc which contains the point $p$ but not the point $q$ and such that $d_H(J_p^*,S^1) <\delta$. In the same way, $J_q^*$ is an arc that contains the point $q$ but not the point $p$ and such that $d_H(J_q^*,S^1) <\delta$. Observe that 
$$
\lim_{i \rightarrow \infty}C(f)^i(x_1)= \{p\} \hspace{0.2cm} \textrm{and} \hspace{0.2cm} \lim_{i \rightarrow - \infty}C(f)^i(x_{-1})= \{q\}. 
$$
Fix $\varepsilon>0$ small enough and $\delta>0$ of the shadowing property. We claim that the $\delta$-pseudo orbit $\{x_i\}$ defined above is not shadowed. Indeed, suppose that there is $A \in C(S^1)$ such that $d_H(C(f)^i(A),x_i) <\varepsilon$ for all $i \in \mathbb{Z}$. Then we have two possibilities, $A$ contains $q$ or $A$ does not contain $q$. If $A$ contains $q$ then
$$\lim_{i \rightarrow \infty } C(f)^i (A)= S^1 \hspace{0.2cm} \textrm{and} \hspace{0.2cm} \lim_{i \rightarrow - \infty}C(f)^i(A) \in  \{ \{q\}, S^1\}$$
and therefore there is $i_0>1$ big enough such that $d_H(C(f)^{i_0}(A),x_{i_0}) > \varepsilon$ and it is a contradiction. If $A$ does not contain $q$ then
$$\lim_{i \rightarrow \infty } C(f)^i (A)= \{p\} \hspace{0.2cm} \textrm{and} \hspace{0.2cm} \lim_{i \rightarrow - \infty}C(f)^i(A)=S^1$$
and therefore there is $j_0>1$ big enough  such that $d_H(C(f)^{-j_0}(A), x_{-j_0})> \varepsilon$ and it is a contradiction too. Therefore, $C(f)$ does not have the shadowing property. 

If $f$ has more than two fixed points, we consider two consecutive points, one attractor point and one repeller point and define a $\delta$-pseudo orbit as above. The difference is that $\lim_{i \rightarrow \infty}C(f)^{i}(x_1)$ and $\lim_{i \rightarrow \infty}C(f)^{i}(x_{-1})$ are $D$-arcs, i.e., arcs whose end points are fixed points of $f$ (see definition in following section).
\end{proof}

Theorem A remains open for Morse-Smale diffeomorphisms on manifolds with dimension $n\geq 2$. However, for North Pole - South Pole diffeomorphisms on $2$-sphere we generalize the proof of Theorem A. 

\begin{proposition}
If $f:S^2\to S^2$ is a Morse-Smale diffeomorphism with only two fixed points. Then the induced map $C(f)$ has not the shadowing property.
\end{proposition}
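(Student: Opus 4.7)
I would imitate the proof of Theorem~A. Assume, for contradiction, that $C(f)$ has the shadowing property. Let $p$ denote the attracting fixed point and $q$ the repelling one; these are the only two fixed points by hypothesis, and $W^s(p)=S^2\setminus\{q\}$, $W^u(q)=S^2\setminus\{p\}$. Consequently every subcontinuum of $S^2$ avoiding $q$ has forward Hausdorff limit $\{p\}$, and symmetrically for $q$ under backward iteration. Fix $\varepsilon>0$ with $\varepsilon<d(p,q)/3$, and let $\delta>0$ be the shadowing constant corresponding to $\varepsilon$.

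Following the construction of Theorem~A, pick continua $K_p^{*}, K_q^{*}\in C(S^2)$ with $p\in K_p^{*}$, $q\notin K_p^{*}$, $d_H(K_p^{*},S^2)<\delta$ (for example $K_p^{*}=\overline{S^2\setminus B(q,\delta)}$), and with the symmetric property for $K_q^{*}$. Define $x_0=S^2$, $x_1=K_p^{*}$, $x_{-1}=K_q^{*}$, and for $n\geq 2$ set $x_n=C(f)^{n-1}(K_p^{*})$ and $x_{-n}=C(f)^{-(n-1)}(K_q^{*})$. Only the transitions at $n=0$ and $n=-1$ are non-exact, and each has Hausdorff size at most $\delta$, so $\{x_n\}_{n\in\mathbb{Z}}$ is a $\delta$-pseudo orbit with $x_n\to\{p\}$ and $x_{-n}\to\{q\}$.

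Now suppose $A\in C(S^2)$ $\varepsilon$-shadows $\{x_n\}$ and analyse by cases, as in Theorem~A. If $q\in A$ then $q\in C(f)^n(A)$ for every $n$; but $x_n\to\{p\}$ and $d(p,q)>3\varepsilon$, so for $n$ sufficiently large $d_H(C(f)^n(A),x_n)\geq d(p,q)-d_H(x_n,\{p\})>\varepsilon$, a contradiction. The symmetric argument, applied to backward iterates, rules out $p\in A$.

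The main obstacle, and the essential point of departure from the $S^1$ case, is the remaining possibility $\{p,q\}\cap A=\emptyset$. On $S^1$ it was automatically excluded, because any arc $\varepsilon$-close to $S^1$ must contain at least one of the fixed points (the complementary arc has length strictly less than $2\varepsilon<d(p,q)$). On $S^2$, however, a closed annular subcontinuum wedged between small disks around $p$ and $q$ can be Hausdorff $\varepsilon$-close to $S^2$ while avoiding both poles, so a purely dimensional argument no longer suffices. To close this final case I would refine the shape of $K_p^{*}$ and $K_q^{*}$ (for instance by attaching a thin tendril to $K_p^{*}$ pointing towards $q$, so that the complement is not a topological disk) and then exploit the contraction rate $\lambda<1$ of $Df_p$ and the expansion rate $\mu>1$ of $Df_q$ to track how the two complementary ``hole radii'' of any annular shadowing $A$ evolve geometrically under iteration; the aim is to show that these rigid radii cannot simultaneously match the Hausdorff profile of the $x_n$ for all $n$. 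Carrying out this quantitative matching is the technical heart of the proof and precisely the step that has no one-dimensional analogue.
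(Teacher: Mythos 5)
Your reduction to three cases is the right template, and the first two cases ($q\in A$ and $p\in A$) are handled correctly. The genuine gap is the third case, and it is not merely a missing technical step: with your choice of pseudo-orbit the third case cannot be closed, because that pseudo-orbit \emph{is} $\varepsilon$-shadowed by a continuum avoiding both poles. Take $A=\overline{S^2\setminus\bigl(f^{-1}(B(q,\delta))\cup f(B(p,\delta))\bigr)}$, a closed annulus. Then $d_H(A,S^2)\leq\delta$; for $n\geq 1$ the set $f^n(A)$ differs from $x_n=f^{n-1}\bigl(\overline{S^2\setminus B(q,\delta)}\bigr)$ only by the small disk $f^{n+1}(B(p,\delta))$, whose diameter stays below $\varepsilon$ (and for large $n$ both sets lie inside $B(p,\varepsilon)$ anyway); symmetrically, in backward time the only discrepancy is the shrinking disk $f^{-n+1}(B(q,\delta))$. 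The point is that the forward tail of your pseudo-orbit constrains only the $q$-hole of $A$ and the backward tail constrains only the $p$-hole, and these two constraints are independent, so the ``quantitative matching of hole radii'' you hope will fail actually succeeds. Attaching a tendril to $K_p^*$ does not help, since the annulus already contains a neighbourhood of any such tendril.

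The paper escapes this by choosing a pseudo-orbit of a different nature: its elements are not complements of disks in $S^2$ but arcs of a fixed invariant circle $S_1$ through $p$ and $q$, namely $x_0=S_1$, $x_1=S_1\setminus B(q,\delta/2)$, $x_{-1}=S_1\setminus B(p,\delta/2)$ and their exact iterates. A shadowing continuum $A$ must then stay Hausdorff-close to $S_1$; after ruling out $p,q\in A$ as you do, the paper takes two further circles $S_2,S_3$ through the poles, meeting only at $p$ and $q$, bounding a region $R$ around $S_1$ whose two components are joined only at the poles, so a connected $A$ close to $S_1$ and avoiding both poles must leave $R$. The piece of $A$ outside $R$ then remains a definite distance from $S_1$ until its backward iterates enter $B(q,\varepsilon)$, and during that transit $f^{-i}(A)$ cannot be $\varepsilon$-close to the arc $x_{-i}\subset S_1$. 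To salvage your plan you would have to replace your top-dimensional pseudo-orbit by a one-dimensional one of this kind.
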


\begin{proof}
Suppose that $C(f)$ has the shadowing property. Let $S_1$ be the circle contains the points $p$ and $q$. Let $\varepsilon>0$ small enough such that 
$$B_H(\{p\}, \varepsilon) \cap B_H(\{q\}, \varepsilon) \cap B_H(S_1, \varepsilon)= \emptyset.$$
Then there is $\delta>0$ such that every $\delta$-pseudo orbit $\{x_n\}$ of $C(f)$ is $\varepsilon$-shadowed by real orbit of $C(f)$ and for each $\delta$, we can consider the following $\delta$-pseudo orbit:
$$
x_0=S_1, x_1=S_1 \setminus {B(q, \delta/2)}, x_{-1}=S_1\setminus {B(p, \delta/2)}
$$ 
$$
x_i=C(f)^i(x_1) , \hspace{0.2cm} \textrm{and} \hspace{0.2cm} x_{-i}=C(f)^{-i}(x_{-1}) \hspace{0.2cm} \textrm{if} \hspace{0.2cm} i \geq 2
$$
where $x_1=S_1 \setminus B(q, \delta)$ is an arc which contains the point $p$ but not the point $q$ and such that $d_H(x_1,S_1) <\delta$. In the same way, $x_{-1}=S_1\setminus B(p, \delta/2)$ is an arc that contains the point $q$ but not the point $p$ and such that $d_H(x_{-1},S_1) <\delta$. Observe that 
$$
\lim_{i \rightarrow \infty}C(f)^i(x_1)= \{p\} \hspace{0.2cm} \textrm{and} \hspace{0.2cm} \lim_{i \rightarrow - \infty}C(f)^i(x_{-1})= \{q\}. 
$$
Fix $\varepsilon>0$ small enough and $\delta>0$ of the shadowing property. We claim that the $\delta$-pseudo orbit $\{x_i\}$ defined above is not shadowed. Indeed, suppose that there is $A \in C(S^2)$ such that $d_H(C(f)^i(A),x_i) <\varepsilon$ for all $i \in \mathbb{Z}$. \\

\textbf{Claim 1:} The fixed points $p,q \not\in A$.\\
It follows from the proof of Theorem A\\

\textbf{Claim 2:} Let $S_2$ and $S_3$ circles containing the fixed points $p$ and $q$ and such that $S_i \cap S_j=\{p,q\}$ if $i \neq j \in \{1,2,3\}$. Then 
$$A \cap R^c\neq \emptyset$$
where $R$ is the region limited by $S_2$ and $S_3$ which contains $S_1-\{p,q\}$ in its interior. \\
In fact, suppose that $A\cap R^c=\emptyset$, then $A \subset R$. On the other hand, $R$ is a disconnected set with two connected components $R_1$ and $R_2$.  Since by hypothesis  $A$ is a continuum such that $d_H(A, S_1) < \varepsilon$ then $A \cap R_i \neq \emptyset$ for $i \in \{1,2\}$. Therefore $p$ or $q$ belongs to $A$, which is a  contradiction. \\

Let $S_2$ and $S_3$ as Claim 2 and such that $S_i \cap \overline{V(S_1, \varepsilon)}^c \neq \emptyset$ with $i \in \{2,3\}$. Then by Claim 2 $A \cap R^c \neq \emptyset$. Let $C$ be a connected component of $A \cap R^c$. So $f^j(C) \subset R^c$ for every $i \in \mathbb{Z}$. Consider the minimum of $i_0$ such that $f^{-i_0}(C) \subset B(q,\varepsilon)$. So there is $i \in \{1,2,...,i_0-1\}$ such that $C$ is not contained in $V(x_{-i}, \varepsilon)$. Thus $d_H(f^{-i}(A), x_{-i})> \varepsilon$ for some $i \in \{1,2,...,i_0-1\}$. Therefore, $C(f)$ does not have the shadowing property.

 
\end{proof}



\section{Topological entropy of the induced Morse-Smale diffeomorphisms}\label{Section-5}

This section is divided into topological entropy of the induced maps $2^f$ and $C(f)$ of Morse-Smale diffeomorphisms and sufficient conditions to obtain infinite topological entropy on the hyperspace. 

\subsection{Topological entropy of the induced maps $2^f$ and $C(f)$ of Morse-Smale diffeomorphisms}

In \cite{LR}, the authors obtain sufficient conditions to the topological entropy of the hyperspace map $2^f$ to be infinite. It can quickly be checked that these conditions hold for Morse-Smale diffeomorphisms on the circle. In the same way, we can prove that this property remains valid when the dimension of the manifold is greater than one as follows.

\begin{proposition}
Let $M$ be a $n$-dimensional compact and connected manifold without boundary. If $f: M \rightarrow M$ is a Morse-Smale diffeomorphism, then the topological entropy of its induced map $2^f$ is infinite.
\end{proposition}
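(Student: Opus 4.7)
The plan is to reduce the claim to the sufficient condition for $h(2^f) = \infty$ established by Lampart-Raith \cite{LR}: namely the existence of a nonempty open wandering set whose bi-infinite iterates accumulate on a finite set of periodic points. Their construction (encoding arbitrarily long words over arbitrarily large alphabets into Hausdorff-separated families of closed subsets) is purely topological, so once this wandering structure is exhibited, the conclusion follows verbatim.

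The Morse-Smale facts recalled in Section \ref{section-preliminares} furnish an attractor periodic point $p$ and a repeller periodic point $q$ such that $W^s(p) \cap W^u(q)$ is a nonempty open subset of $M$. Pick any $x$ in this intersection. Since $x$ is nonperiodic, $\mathcal{O}(x) \cap (\mathcal{O}(p) \cup \mathcal{O}(q)) = \emptyset$, and the key dynamical fact is $f^n(x) \to \mathcal{O}(p)$ as $n \to \infty$ and $f^n(x) \to \mathcal{O}(q)$ as $n \to -\infty$, so the orbit has finite accumulation set $\mathcal{O}(p) \cup \mathcal{O}(q)$, disjoint from $x$ itself.

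To construct the wandering open set, I would choose a small open neighborhood $U$ of $x$ satisfying (i) $\overline{U} \subset W^s(p) \cap W^u(q)$, (ii) $\overline{U}$ is disjoint from $\mathcal{O}(p) \cup \mathcal{O}(q)$, and (iii) $f^m(U) \cap U = \emptyset$ for every $m \neq 0$. Condition (iii) holds for large $|m|$ via (ii) and the uniform convergence of $f^{\pm m}(\overline{U})$ to the periodic orbits; for each small $|m|$ it follows from continuity of $f$ combined with $f^m(x) \neq x$. Since only finitely many of these conditions are nontrivial, a single sufficiently small $U$ works. Then $\{f^j(U)\}_{j \in \mathbb{Z}}$ is a pairwise disjoint family whose only accumulation set is the finite periodic set $\mathcal{O}(p) \cup \mathcal{O}(q)$; this verifies the Lampart-Raith hypothesis.

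Applying the LR encoding, one then picks, for arbitrary $N \in \mathbb{N}$, $N$ pairwise disjoint nonempty open subsets of $U$ and codes words $w \in \{1, \ldots, N\}^n$ into $(n, \varepsilon)$-separated families in $2^M$, yielding $h(2^f) \geq \log N$ for every $N$ and hence $h(2^f) = \infty$. The main obstacle is the finite-time wandering condition (iii) for small $|m|$, which is handled by shrinking $U$ using continuity of $f$ and the nonperiodicity of $x$; everything else is standard Morse-Smale structure theory already recorded in Section \ref{section-preliminares}, so the argument from \cite{LR} transfers without change from dimension one to arbitrary $n$.
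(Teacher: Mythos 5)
Your proposal is correct and follows essentially the same route as the paper: both arguments rest on the Morse--Smale fact that some attractor $p$ and repeller $q$ have $W^s(p)\cap W^u(q)$ a nonempty open set, select countably many points there with pairwise disjoint wandering orbits accumulating only on $\mathcal{O}(p)\cup\mathcal{O}(q)$, and encode closed subsets of the resulting orbit closure into a full shift over an arbitrarily large alphabet. The only difference is one of packaging --- you verify the Lampart--Raith-type hypothesis and cite their encoding, while the paper writes the semiconjugacy $\phi\colon 2^{L_r}\to(\{0,1\}^r)^{\mathbb{Z}}$ explicitly to get $h(2^f)\geq r\log 2$ for every $r$; your care in arranging $f^m(U)\cap U=\emptyset$ for $m\neq 0$ actually makes the disjointness of the orbits (which the paper leaves implicit) cleaner.
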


\begin{proof}
Without loss of generality, we can assume that all periodic points of $f$ are fixed (otherwise we pass to some iteration of $f$). Consider one attractor fixed point $p$ and one repeller fixed point $q$ such that $W^s(p) \cap W^u(q)$ is a nonempty open set. In general, $W^s(p) \cap W^u(q)$ is not a connected set. So, let $W$ one open connected component of $W^s(p) \cap W^u(q) \neq \emptyset$ and let $\{y_k\}_{k=0}^{\infty}$ a countable subset of $W$. Fix an $r \in \mathbb{N}$. Then $L_r= \cup_{k=0}^{r-1} \overline {\emph{O}(y_k)} = \{f^n(y_k): 0 \leq k \leq r-1, n \in \mathbb{Z} \}  \cup \{p,q\}.$
Define a semiconjugacy $\phi$ between $2^{L_r}$ and $(\{0,1\}^r)^{\mathbb{Z}}$ as follows 
		
$\phi(A)= ((w_{0,n},w_{1,n},...,w_{r-1,n}))_{n \in \mathbb{Z}},$ 
$
\textrm{   where   } w_{k,n}=\left\{\begin{array}{rc}
1,&\mbox{if}\quad f^n(y_k) \in A ;\\
0,&\mbox{otherwise};\quad
\end{array}\right. 
$ \\
This map is continuous and the pre-image of every sequence in $(\{0,1\}^r)^{\mathbb{Z}}$ has at most four elements. Therefore, satisfies the equality $\phi \circ 2^f = \sigma \circ \phi$ where $\sigma$ is the shift map. Besides that $\sigma $ is conjugated to the two-sided on $2^r$ symbols, so that $h(2^f) \geq h(2^f|_{2^{L_r}})= \log (2^r)$ for any $r$. Thus $h(2^f)= \infty$. 
\end{proof}

The topological entropy of the induced map $C(f)$ has two possible values:\\


\textbf{Theorem B.} Let $f: M^n \rightarrow M^n$ be a Morse-Smale diffeomorphism, then the topological entropy of its induced continuum map $C(f)$ is zero or infinite.\\

Indeed, the topological entropy of the induced continuum map $C(f)$ depends on the dimension of the manifold. For this reason, to prove Theorem B, we will consider two cases: if the manifold is the circle $S^1$ and if $M^n$ is a manifold with dimension greater than two.\\

The first case, is a direct consequence of Lemma \ref{Theorem-nowandering-finite}, see Section \ref{Section-3}.   

\begin{theorem}
\label{entropy-zero-continuum}
Let $f: S^1 \rightarrow S^1$ be a Morse-Smale diffeomorphism, then the topological entropy of $C(f)$ is zero.
\end{theorem}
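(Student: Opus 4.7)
The plan is to reduce the entropy computation to a trivial one by exploiting Lemma \ref{Theorem-nowandering-finite}. First, I would observe that in both cases of that lemma, the set $\Omega(C(f))$ is actually \emph{finite}. Indeed, when $f$ preserves orientation with all periodic points of common period $N$, a $D$-arc of $f^N$ is determined by an ordered pair of endpoints in the finite set $\text{Fix}(f^N) = \text{Per}(f)$, so $\text{Per}_N(C(f))$ is finite, and adding $\{S^1\}$ keeps it finite. The orientation-reversing case is analogous: $D$-arcs, $D$-mix-arcs, and $D^*$-mix-arcs are each parameterized by pairs of points in the finite set $\text{Per}(f)$.

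Second, I would invoke the standard fact that for a homeomorphism $g$ of a compact metric space, $h(g) = h(g|_{\Omega(g)})$. The quickest route is via the variational principle: any $g$-invariant Borel probability measure is supported on $\Omega(g)$ (by Poincar\'e recurrence the wandering set has measure zero), so
\[
h(g) \;=\; \sup_{\mu} h_\mu(g) \;=\; \sup_{\mu} h_\mu(g|_{\Omega(g)}) \;=\; h(g|_{\Omega(g)}).
\]
Applying this to $g = C(f)$, the entropy of $C(f)$ equals the entropy of a homeomorphism of the finite set $\Omega(C(f))$, which is a permutation of a finite set and therefore has zero topological entropy (for any $\varepsilon$ smaller than the minimum distance between distinct points, the $(n,\varepsilon)$-separated sets are bounded uniformly in $n$ by $\#\Omega(C(f))$).

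There is essentially no obstacle: the whole weight of the argument sits inside Lemma \ref{Theorem-nowandering-finite}, which has already been proved. The only mild point worth mentioning explicitly is the justification that $h(g) = h(g|_{\Omega(g)})$; if one prefers to avoid the variational principle, one can instead argue directly that any $(n,\varepsilon)$-separated set in $C(S^1)$ has bounded cardinality, by using a filtration for $f$ to show that every orbit under $C(f)$ eventually enters and stays near $\Omega(C(f))$, and then bounding separated sets there. Either way the conclusion $h(C(f)) = 0$ follows immediately.
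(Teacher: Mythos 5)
Your proposal is correct and follows essentially the same route as the paper: both deduce from Lemma \ref{Theorem-nowandering-finite} that $\Omega(C(f))$ is a finite set of periodic points and then conclude $h(C(f))=0$ because topological entropy is carried on the non-wandering set. You merely spell out the two standard ingredients (finiteness of the set of $D$-arcs and the identity $h(g)=h(g|_{\Omega(g)})$) that the paper leaves implicit.
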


\begin{proof}
By Lemma \ref{Theorem-nowandering-finite}, the non-wandering set consists only of a finite number of periodic points and consequently the topological entropy is zero.
\end{proof}

\begin{remark}
Note that if $f: X \rightarrow X$ is a continuous map on a compact metric space, then for all $n \in \mathbb{N}$ the induced map $C_n(f)$ is a subsystem of $2^f$ which we denoted by $C_n(f) \leq 2^f$. So  
$$
C(f) \leq C_2(f) \leq C_3(f) \leq ... \leq 2^f.
$$
Therefore, if $f$ is a Morse-Smale diffeomorphisms on the circle we have that 
$$
h(C(f))=0 \leq h(C_2(f)) \leq h(C_3(f)) \leq ... \leq h(2^f)=\infty.
$$

Nevertheless, following the proof of Theorem \ref{Theorem-nowandering-finite} we can show $h(C_n(f))=0$ for all $n \in \mathbb{N}$. Thus, we have the following question.

 
\begin{question}
There exists a dynamical system $(X,f)$ such that 
$$
h(C(f)) < h(C_n(f)) < h(2^f)
$$ 
for some $n \geq 2$?
\end{question}

\end{remark}	

To the second case, we prove that if the dimension of $M$ is greater than or equal to $2$ then the topological entropy is infinite. For this, we follow the techniques used in Example $7$ in \cite{Abouda-Naghmouchi} by Abouda and Naghmouchi. In this article, the authors use the definition of topological entropy with separated sets, see Section \ref{section-preliminares}.
\begin{theorem} 
\label{Theorem-C(f)-infinite}
Let $M$ be a $n$-dimensional compact and connected manifold without boundary with $n \geq 2$. If $f:M \rightarrow M$ is a Morse-Smale diffemorphism, then the topological entropy of its induced map $C(f)$ is infinite.
\end{theorem}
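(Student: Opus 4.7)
The strategy is to exhibit, for every integer $r \ge 2$, a compact invariant subset of $C(M)$ on which $C(f)$ is topologically conjugate to the full shift on $r$ symbols; this forces $h(C(f)) \ge \log r$ for every $r$, so $h(C(f)) = \infty$. Because $h(C(f)^L) = L\,h(C(f))$, I may first pass to a suitable iterate $g = f^L$ so that some attracting fixed point $p$ and some repelling fixed point $q$ of $g$ satisfy $W^s(p) \cap W^u(q) \ne \emptyset$ (guaranteed by the structural list in Section \ref{Section-2}), and so that some connected component $W$ of this intersection is $g$-invariant. It then suffices to prove $h(C(g)) = \infty$.

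For the construction, fix $y \in W$; then $\mathcal{O}(y) = \{g^k(y)\}_{k\in\mathbb{Z}}$ is a discrete subset of $W$ accumulating only at $\{p,q\}$. Since $\dim M \ge 2$, the open connected set $W$ is a manifold of dimension at least two. Inside $W$ I choose $r$ arcs $\gamma^{(0)},\dots,\gamma^{(r-1)}$, each joining $y$ to $g(y)$, pairwise disjoint except at these two endpoints, and all avoiding $\mathcal{O}(y) \setminus \{y,g(y)\}$. By selecting the arcs within a sufficiently thin tubular neighborhood of one fundamental arc from $y$ to $g(y)$ (using the transverse room supplied by $n \ge 2$), I arrange the key disjointness property
$$
g^k\bigl(\gamma^{(i)}\bigr) \cap g^l\bigl(\gamma^{(j)}\bigr) \subset \mathcal{O}(y) \quad \text{whenever } (k,i) \ne (l,j).
$$

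With these arcs fixed, let $\Sigma_r = \{0,\dots,r-1\}^{\mathbb{Z}}$ carry the shift $\tau$ and define
$$
\Phi(s) \;=\; \{p,q\} \cup \bigcup_{k \in \mathbb{Z}} g^k\bigl(\gamma^{(s_k)}\bigr).
$$
That $\Phi(s) \in C(M)$ rests on two facts: first, $\operatorname{diam}\,g^k(\gamma^{(j)}) \to 0$ uniformly in $j$ as $|k|\to\infty$, because $\gamma^{(j)} \subset W^s(p) \cap W^u(q)$ and iterates of this compact arc contract to $p$ (forward) and to $q$ (backward), so $\Phi(s)$ is already closed; second, consecutive arcs share the orbit point $g^{k+1}(y)$, which makes the union of arcs path-connected, and $\{p,q\}$ supplies the only additional limit points, giving connectedness. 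Continuity of $\Phi$ follows by absorbing the tail arcs into $B(p,\varepsilon) \cup B(q,\varepsilon)$ and using that $s^n \to s$ in $\Sigma_r$ forces agreement on an arbitrarily long central window. Injectivity uses the disjointness arranged above: any interior point of $g^k(\gamma^{(j)})$ lies off $\mathcal{O}(y)$ and belongs to no other arc of the union, so it recovers $(k,j)$ and hence $s_k$. A direct computation gives $C(g) \circ \Phi = \Phi \circ \tau$.

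Consequently $\Phi$ is a topological conjugacy between $(\Sigma_r, \tau)$ and the invariant subsystem $\Phi(\Sigma_r) \subset C(M)$, giving $h(C(g)) \ge h(\tau) = \log r$. Letting $r \to \infty$ yields $h(C(g)) = \infty$, and dividing by $L$ gives $h(C(f)) = \infty$. The main technical obstacle is the disjointness condition in the construction step, since one must simultaneously control bi-infinitely many $g$-iterates of the chosen arcs so that they interact only at the unavoidable orbit points; it is exactly here that the hypothesis $n \ge 2$ is essential, as the transverse directions needed to separate iterated arcs are available only when the ambient manifold has dimension greater than one.
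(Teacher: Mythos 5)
Your proof is correct in substance but takes a genuinely different route from the paper's. You embed, for each $r$, a full shift on $r$ symbols into $C(M)$ by choosing $r$ internally disjoint arcs between consecutive orbit points $y$ and $g(y)$ in $W^s(p)\cap W^u(q)$, the symbol at time $k$ recording which arc sits between $g^k(y)$ and $g^{k+1}(y)$; the conjugacy $\Phi$ then gives $h(C(f))\geq \log r$ directly. The paper instead uses a single arc $\gamma$ from $x$ to $f(x)$ plus one transversal arc $\beta$ with pairwise disjoint iterates, encodes a symbol in $\{1,\dots,k\}$ by the depth $[a_{\sigma_j},y]\subset\beta$ to which the continuum penetrates at time $j$, and concludes by exhibiting $(n,C(f)^{-1},\delta)$-separated families $\{C_\sigma\}$ of cardinality $k^n$ rather than an embedded subshift. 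Your version is conceptually cleaner and yields an honest conjugated full shift; the paper's needs only one auxiliary arc and no multiple-arc general position. The one step you should tighten is the ``key disjointness property'': the clean way to get $g^k(\gamma^{(i)})\cap g^l(\gamma^{(j)})\subset\mathcal{O}(y)$ for $k\neq l$ is to place the interiors of all $r$ arcs inside the fundamental domain $Q^s=B^s\setminus g(\overline{B}^s)$ intersected with $W^u(q)$, exactly as the paper does for its single $\gamma$, since the sets $g^k(Q^s)$ are pairwise disjoint; a thin tubular neighborhood of one fundamental arc does not by itself prevent adjacent iterates from overlapping near the endpoints. Note also that full internal disjointness of the $\gamma^{(i)}$ is more than you need: injectivity of $\Phi$ already follows from the arcs being distinct with interiors in $Q^s$.
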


\begin{proof}
Without loss of generality, we can assume that $f$ has only fixed points (otherwise we pass to some iteration of $f$). It is known there are an attractor $p$ and a repeller $q$ such that $W^s(p) \cap W^u(q) \neq \emptyset$.  Let $B^s$ be an open ball with center at $p$ such that $f(\partial B^s) \subset B^s$ where $\partial B^s= \overline{B}^s \setminus B^s$ is the boundary of $B^s$. Let $ Q^s=B^s \setminus f(\overline{B}^s)$ be the interior of a fundamental domain for the stable manifold of $p$. Similarly, we consider $B^u$ an open ball with center at $q$ such that $f^{-1}(\partial B^u) \subset B^u$  where $\partial B^u = \overline{B}^u \setminus B^u$ is the boundary of $B^u$ and $Q^u= B^u \setminus f(\overline{B}^u)$ is the interior of a fundamental domain for the unstable manifold of $q$. The Hartman-Grobman Theorem guarantees the existence of the sets $B^s$ and $B^u$, see \cite{PM} pg. 81. Since that $W^u(q)$ is a connected set, we have that there exists a point $x \in \partial B^s \cap W^u(q)$. Consider the point $x$, the point $f(x)$ and a connected component $N$ of $Q^s \cap W^u(q)$ with $x$ in its boundary. Notice that $N$ is an open set of $M$, see \cite{Kuratowski}  pg. 230. 
Set $y \in N$, an arc $\gamma$ with end points $x$ and $f(x)$ such that $y \in \gamma \setminus \{x,f(x)\} \subset Q^s$.  The arc $\gamma$ exits since $N$ is a region (a connected open set) and $n\geq 2$, see \cite{Kuratowski} pg. 230.  
\\
Let $m_0= \min \{n: f^{-n}(y) \in B^u\}$ and $\varepsilon> 0$ small enough such that:
\begin{itemize}
\item [i.] $B(y, \varepsilon) \subset N$
\item [ii.] $B(f^{-m_0}, \varepsilon) \subset B^u$,
\item [iii.] and $B(f^i(y), \varepsilon) \cap B(f^j(y), \varepsilon)= \emptyset$ for $i \neq j$ and $i,j \in \{0,-1,-2,...,-m_0\}$. 
\end{itemize}
 By the continuity of $f$, we can consider an arc $\beta \subset B(y, \varepsilon)$ such that $y$ is an end point of $\beta$, $\gamma \cap \beta=\{y\}$ and $f^j(\beta) \subset B(f^j(y), \varepsilon)$ for $j \in \{0,-1,...,-m_0\}$.
 
\begin{lemma}
For all $n \neq m \in \mathbb{Z}$, $f^n(\beta) \cap f^m(\beta)= \emptyset$.
\end{lemma}

\begin{proof}
Suppose that there exist $n_0, m_0 \in \mathbb{Z}$ such that $f^{n_0}(\beta) \cap f^{m_0}(\beta) \neq \emptyset$. Then $\beta \cap f^{m_0-n_0}(\beta) \neq \emptyset$. Thus, we have two possibilities. The first, if $m_0-n_0>0$ then $f^{m_0-n_0}(\beta) \subset f(\overline{B}^s)$ that is a contradiction. The second, if $m_0-n_0<0$ then $f^{m_0-n_0}(\beta) \subset B(f^{-1}(y), \varepsilon) \cup...\cup B(f^{-m_0+1}(y), \varepsilon) \cup B^u$ that is a contradiction.	
\end{proof}

\begin{lemma}
\label{Lema-continuum-MS}
There is $\varepsilon_1>0$ small enough such that  $V(\beta, \varepsilon_1) \cap f^n(\gamma)= \emptyset$ for all $n \in \mathbb{Z} \setminus \{0\}$, with $V(\beta, \varepsilon_1):=\{x \in M: d(x, \beta)<\varepsilon_1\}$.
\end{lemma}
	
\begin{proof}
Suppose that for every $k \in \mathbb{N}$ there is $n_k \in \mathbb{Z} \setminus \{0\}$ such that $f^{n_k}(\gamma) \cap V(\beta, \frac{1}{k}) \neq \emptyset$. Then we have that there exists a subsequence of $\{f^{n_k}(x_k)\}$, where $x_k \in \gamma$, such that 
$$
\lim _{k \rightarrow \infty} f^{n_k}(x_k) \in \beta \hspace{0.2cm} \textrm{or} \hspace{0.2cm} \lim _{k \rightarrow -\infty} f^{n_k}(x_k) \in \beta. 
$$
In the first case, we have there is $n_k$ big enough such that $f^{n_k}(x_k) \in  B(y, \varepsilon) \subset N$. Then $x_k=f^{-n_k}(f^{n_k}(x_k)) \in W^u(q) \setminus \overline{B}^s$ that is a contradiction. In the second case, we have there is $n_k$ big enough such that $f^{-n_k}(x_k) \in B(y, \varepsilon) \subset N$. Therefore, the point $x_k= f^{n_k}(f^{-n_k}(x_k)) \in f(B^s)$ and that is a contradiction. Thus we obtain the desired result.
\end{proof}

\begin{lemma}
If $k \in \mathbb{N}$, there is $\delta>0$ such that $s(n, C(f)^{-1},\delta) \geq k^n$ for all $n \in \mathbb{N}$.
\end{lemma}
	
\begin{proof}
Let $y$ and $z$ be the end points of the arc $\beta$. Fix $\bar{y} \in \beta$ close to the point $y$ and consider $\delta_0$ small enough such that $V([\bar{y},z],\delta_0)=\{x \in M: d(x,[\bar{y},z])< \delta_0\}$ does not contain points of $\gamma$. Since $[\bar{y},z]$ is an arc, it is homeomorphic to interval $[0,1]$. Let $H: [\bar{y},z] \rightarrow [0,1]$ be a homeomorphism such that $H(\bar{y})=0$ and $H(z)=1$. Let $\varepsilon_1>0$ be given by Lemma ~\ref{Lema-continuum-MS} and let $k \in \mathbb{N}$. There exists $\delta< \min\{\frac{1}{k},\varepsilon_1,\delta_0\}$ such that 
$$
d(x,y)< \delta \hspace{0.2cm} \textrm{implies} \hspace{0.2cm} |H(x)-H(y)|< \frac{1}{k}.
$$
Let $a_i=H^{-1}(i/k)$ for $i=1,...,k-1$. Let $n \in \mathbb{N}$, let $\sigma= (\sigma_0, \sigma_1...,\sigma_{n-1}) \in \{1,2,...,k\}^n$ and let $C_{\sigma}$ be the subtree of $\Lambda=\overline{\bigcup_{j \in \mathbb{Z}} f^{j}(\gamma \cup \beta)}$ defined as follow:
$$
C_{\sigma}=\bigcup_{j=0}^{n-1} f^j([a_{\sigma_j},y]) \cup  \bigcup_{j=0}^{n-2}f^j(\gamma).
$$
If $\sigma \neq \sigma^{'} \in \{1,2,...,k\}^n$, there is $j_0 \in \{0,1,...,n-1\}$ such that $\sigma_{j_0} \neq \sigma^{'}_{j_0}$. Without loss of generality, we can assume $\sigma_{j_0} <  \sigma^{'}_{j_0}$. Then
$$
\big| \frac{\sigma^{'}_{j_0}}{k} - t \big| >\frac{1}{k} \hspace{0.2cm} \textrm{for all} \hspace{0.2cm} t \in \big[0,\frac{\sigma_{j_0}}{k} \big].
$$
By the continuity of $H$ we have that
$$
d(a_{\sigma^{'}_{j_0}},H^{-1}(t)) > \delta \hspace{0.2cm} \textrm{for all} \hspace{0.2cm} t \in \big[0,\frac{\sigma_{j_0}}{k} \big].
$$
If it is necessary, we consider a smaller $\delta>0$ such that
$$
a_{\sigma^{'}_{j_0}} \in f^{-j_0}(C_{\sigma^{'}}) \hspace{0.2cm} \textrm{and} \hspace{0.2cm} a_{\sigma^{'}_{j_0}} \notin V(f^{-j_0}(C_{\sigma}),\delta).
$$  
Therefore $d_H(f^{-j_0}(C_{\sigma}),f^{-j_0}(C_{\sigma^{'}})) \geq \delta$. Thus, the collection of subtrees
$$
\{C_{\sigma}: \sigma \in \{1,...,k\}^n\} \subset C(\Lambda)$$
is $(n,C(f)^{-1},\delta)$-separated set  and $s(n,C(f)^{-1},\delta) \geq k^n$.	
\end{proof}
	
As a consequence, $h(C(f^{-1})) \geq \ln (k)$ for all $k \in \mathbb{N}$. By definition of topological entropy, we have $h(C(f))=h(C(f)^{-1})= \infty$.
\end{proof}

\subsection{Sufficient conditions to obtain infinite topological entropy in Hyperspace}\label{Section-5.2}

In this section we will present three sufficient conditions to obtain infinite topological entropy on the hyperspace $2^X$ and $C(X)$.

\subsubsection{Infinite topological entropy in $2^X$}

In \cite{DO}  the authors showed that the existence of homoclinic points in the base system generate positive entropy for the induced map $2^f$, see Theorem 13. With slight changes in the proof, it is possible to generalize Theorem 13 for a subshift of the two-sided shift $\Sigma_r$, with $r \geq 2$ large enough.  And thus it is possible to produce an arbitrary gap between the topological entropy of the base map and its induced map to the hyperspace as we will see in the following example.\\

\begin{ex}
Let $Q=\prod_{n \in \mathbb{Z}}J_n$ where $J_n=[0,1]$ for each $n \in \mathbb{Z}$ and let $D$ be a metric on $Q$ defined as follows:
$$D(\hat{t},\hat{s})=\sum_{n \in \mathbb{Z}} \frac{|t_n-s_n|}{2^{|n|}}$$
for $\hat{t}=(t_n)_{n \in \mathbb{Z}}$ and $\hat{s}=(s_n)_{n \in \mathbb{Z}}$. Let $\sigma: Q \rightarrow Q$ be the shift map. In \cite{AIM} it is shown some dynamical properties, in particular that $h(\sigma)=\infty$.  Now consider $S_Q$ the set of all bi-infinite sequences for which symbol $1/p$ occurs at most once, where $p \in \{1,2,...\}$. For $n \in \mathbb{Z}$ we denote by $b_n^p$ the point in $S_Q$ whose $n$th coordinate is the symbol $1/p$, and by $a$, we denote the sequence $(...,0,0,0,...) \in S_Q$. It is easy to see that $S_Q$ is a sub-shift of the full two-sided shift $Q$. We will write shortly $\sigma_{S_Q}$ for $\sigma|_{S_Q}$.   
\end{ex}
	
\begin{proposition}
\label{Infinite-entropy}
The subshift $(S_Q,\sigma_{S_Q})$ defined above: $h(\sigma_{S_Q})=0$ but $h(2^{\sigma_{S_Q}})= \infty$.
\end{proposition}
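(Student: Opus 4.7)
The plan is to handle the two entropy claims separately, both exploiting the defining combinatorial constraint that each symbol $1/p$ occurs at most once in any sequence of $S_Q$, together with the fact that $b_n^p \to a$ as $|n| \to \infty$ for each fixed $p$.

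For $h(\sigma_{S_Q}) = 0$ I would bound the separated-set count by a polynomial in $n$. Given $\varepsilon > 0$, choose $N, P \in \mathbb{N}$ large enough so that the contribution to any $D(\sigma^i \hat t, \sigma^i \hat s)$ (with $i \in \{0,\dots,n-1\}$) coming from coordinates outside the window $[-N, n+N-1]$ or from symbols $1/p$ with $p > P$ is strictly less than $\varepsilon$; this is possible because of the geometric weights $2^{-|k|}$ in $D$ together with the fact that each $1/p$ appears at most once. Then whether two sequences are $(n, \varepsilon)$-separated depends only on where the ``large'' symbols $1/1, \dots, 1/P$ sit inside the window. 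Since each such symbol can be absent or occupy exactly one of the $n+2N$ positions, the number of admissible patterns is bounded by $(n+2N+1)^P$, a polynomial in $n$. Therefore $s(n, \varepsilon) \leq (n+2N+1)^P$ and $h(\sigma_{S_Q})=0$.

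For $h(2^{\sigma_{S_Q}}) = \infty$ I would imitate the semiconjugacy argument already used for the Morse--Smale case earlier in this section. Fix $r \in \mathbb{N}$ and set
$$L_r = \{a\} \cup \bigcup_{p=1}^{r} \{b_n^p : n \in \mathbb{Z}\}.$$
Since $b_n^p \to a$ as $|n| \to \infty$ and orbits with different $p$'s only accumulate at $a$, the set $L_r$ is compact and $\sigma_{S_Q}$-invariant, and $2^{L_r}$ is a closed $2^{\sigma_{S_Q}}$-invariant subset of $2^{S_Q}$. Define
$$\Phi: 2^{L_r} \to (\{0,1\}^r)^{\mathbb{Z}}, \qquad \Phi(A)_n = (w_{1,n}, \dots, w_{r,n}), \quad w_{p,n} = \mathbf{1}_{\{b_n^p \in A\}}.$$
A direct check gives $\Phi \circ 2^{\sigma_{S_Q}} = \bar\sigma \circ \Phi$, where $\bar\sigma$ is the shift on $(\{0,1\}^r)^{\mathbb{Z}}$, topologically conjugate to the full two-sided shift on $2^r$ symbols and so of entropy $r \log 2$. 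Surjectivity is immediate by taking $A_\omega = \{a\} \cup \{b_n^p : w_{p,n}(\omega)=1\}$, which is closed in $L_r$ because $a$ is its only possible accumulation point and is included in $A_\omega$.

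The main technical point is verifying the continuity of $\Phi$: for each $p \leq r$ and $n \in \mathbb{Z}$, a short computation with the metric $D$ shows that $b_n^p$ is isolated in $L_r$ with isolation radius at least $c_r \cdot 2^{-|n|}$ for some $c_r > 0$ depending only on $r$, so the indicator $w_{p,n}$ is constant on the Hausdorff ball $\{A' : H_D(A,A') < c_r \cdot 2^{-|n|}\}$; this yields the required continuity into the product topology. Once continuity is in hand, $\bar\sigma$ is a topological factor of $2^{\sigma_{S_Q}}|_{2^{L_r}}$, giving
$$h(2^{\sigma_{S_Q}}) \geq h\bigl(2^{\sigma_{S_Q}}|_{2^{L_r}}\bigr) \geq h(\bar\sigma) = r \log 2$$
for every $r$, and letting $r \to \infty$ yields $h(2^{\sigma_{S_Q}}) = \infty$.
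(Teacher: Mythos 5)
Your proof is correct, and for the main claim $h(2^{\sigma_{S_Q}})=\infty$ it uses the same key device as the paper: the semiconjugacy $\Phi(A)_n=(w_{1,n},\dots,w_{r,n})$ with $w_{p,n}=\mathbf{1}_{\{b_n^p\in A\}}$ onto the full shift on $2^r$ symbols. The differences are worth noting. First, the paper defines $\phi$ on all of $2^{S_Q}$ and asserts continuity; as stated that map is in fact \emph{not} continuous (for instance the singletons $\{x_k\}$, where $x_k$ has $1/1$ at position $0$ and $1/2$ at position $k$, converge in Hausdorff metric to $\{b_0^1\}$ while $\phi(\{x_k\})_0^1=0\neq 1=\phi(\{b_0^1\})_0^1$), so your restriction to the compact invariant set $L_r$, where each $b_n^p$ is isolated with radius comparable to $2^{-|n|}$, is precisely the repair needed — it mirrors what the paper itself does in the analogous Morse--Smale proposition earlier in the section. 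Second, for $h(\sigma_{S_Q})=0$ the paper gives a one-line argument: $\Omega(\sigma_{S_Q})=\{a\}$ and entropy is carried on the nonwandering set. Your polynomial bound $(n+2N+1)^P$ on $(n,\varepsilon)$-separated sets is a more elementary, self-contained alternative that avoids invoking that variational fact; it is correct as written (same window-pattern forces $t_j=s_j$ at every coordinate carrying a symbol $1/p$ with $p\leq P$, and the remaining contributions are bounded by $2^{-N+2}+3/P$). Both routes are valid; yours trades brevity for rigor in exactly the two places where the paper is terse.
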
  
	
\begin{proof} 
Since that $\Omega(\sigma_{S_Q})=\{a\}$, $\sigma_{S_Q}$ has the topological entropy equal zero. We will show that for any $r \geq 1$, the induced map $2^{\sigma_{S_Q}}$ is semi-conjugated to the full shift $\{\{0,1\}^r\}^{\mathbb{Z}}$. For the proof, we define the map $\phi: 2^{S_Q} \rightarrow \{\{0,1\}^r\}^{\mathbb{Z}}$ by
		
\vspace{0.2cm}
		
$\phi(A)= \{(y_n^1,y_n^2,...,y_n^r)\}_{n \in \mathbb{Z}},$ 
$
\textrm{   where   } y_{n}^i=\left\{\begin{array}{rc}
1,&\mbox{if}\quad p\leq r\quad \mbox{and}\quad  b^p_n \in A ;\\
0,&\mbox{otherwise};\quad
\end{array}\right. 
$ 
\vspace{0.2cm}
		
for any $A \in 2^{S_Q}$. It is easy to check that, for any $r \geq 1$, the map $\Phi$ is a semiconjugacy from $2^{\sigma_{S_Q}}$ to $\sigma$. Hence, $h(2^{\sigma_{S_Q}}) = \infty$.	
\end{proof}

Following the ideas of Kwietniak and Oprocha in \cite{DO} we show that Theorem \ref{finite-map}. As corollary we obtain Theorem $5.7$ in \cite{Paloma-Mendez}. This result was given before by Lampart-Raith in \cite{LR}.\\ 
	
\begin{theorem}
\label{finite-map}
\textit{Let $f: X \rightarrow X$ be a surjective map and let $X$ be a compact metric space. If there exists an infinite countable set $A= \{a_1,a_2,...\} \subset X$ such that}
\begin{itemize}
\item [i.]  \textit{$ L= \bigcup\limits_{i \geq 1} \alpha(\{x^{i}_{-n}\}_{n \in \mathbb{Z}_+},f) \cup \omega(a_i,f)$ and $M= \bigcup\limits_{i \geq 1} \{x^i_{-n}\}_{n \in \mathbb{Z}_+}$ are disjoint},
		
\item [ii.] \textit{For every pair $i \neq j$, $i \geq 1$, $j \geq 1$,
$$
Orb(\{x^i_{-n}\}_{n \in \mathbb{Z}_+},f) \cap Orb(\{x^j_{-n}\}_{n \in \mathbb{Z}_+},f) = \emptyset,
$$}
\end{itemize}
\textit{then $h(2^f)= \infty$}.
\end{theorem}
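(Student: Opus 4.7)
The plan is to show $h(2^f) \geq r \log 2$ for every integer $r \geq 1$, which immediately forces $h(2^f) = \infty$. Following the strategy used in the proof of Proposition \ref{Infinite-entropy} and the approach of Kwietniak-Oprocha in \cite{DO}, I would construct, for each such $r$, a topological conjugacy between the full $r$-bit two-sided shift $(\Sigma_r, \sigma)$, where $\Sigma_r = (\{0,1\}^r)^{\mathbb{Z}}$ has entropy $h(\sigma) = r \log 2$, and the restriction of $2^f$ to a compact $2^f$-invariant subset $\Lambda_r \subseteq 2^X$.

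For each $i \leq r$ I would first assemble, from the given backward sequence $\{x^i_{-n}\}_{n \in \mathbb{Z}_+}$ and the forward $f$-orbit of $a_i$, a bi-infinite trajectory $\{a^i_n\}_{n \in \mathbb{Z}}$ with $a^i_0 = a_i$, $a^i_n = f^n(a_i)$ for $n > 0$, and $a^i_{-n} = x^i_{-n}$ for $n \geq 1$, so that $f(a^i_n) = a^i_{n+1}$ throughout. I would then define $\psi \colon \Sigma_r \to 2^X$ by
\[
\psi(\tau) \;=\; \overline{L} \;\cup\; \{\, a^i_n : i \leq r,\ n \in \mathbb{Z},\ \tau^i_n = 1 \,\}.
\]
The key verifications would be: (a) $\psi(\tau)$ is closed, because the orbit points accumulate only on the $\alpha$- and $\omega$-limits contained in $\overline{L}$; (b) $\psi$ is continuous, because agreement of $\tau$ and $\tau'$ on a sufficiently long finite window forces $\psi(\tau)$ and $\psi(\tau')$ to differ only within an arbitrarily small Hausdorff neighborhood of $\overline{L}$; (c) the $f$-invariance of $\overline{L}$ combined with $f(a^i_n) = a^i_{n+1}$ gives $2^f(\psi(\tau)) = \psi(\sigma \tau)$, so $\psi$ intertwines the two dynamics; (d) $\psi$ is injective, since hypothesis (i) keeps the backward orbit points out of $\overline{L}$, hypothesis (ii) keeps orbits corresponding to different indices disjoint, and within each orbit the points are pairwise distinct, so $\tau^i_n$ is recovered from whether $a^i_n \in \psi(\tau)$.

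With these four properties, $\psi$ is a topological conjugacy from $\Sigma_r$ onto $\Lambda_r := \psi(\Sigma_r)$, which is compact and $2^f$-invariant; hence $h(2^f) \geq h(2^f|_{\Lambda_r}) = h(\sigma) = r \log 2$, and letting $r \to \infty$ yields $h(2^f) = \infty$.

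The step I expect to be the most delicate is the injectivity clause (d). Hypothesis (i) rules out backward orbit points lying in $L$, but a forward point $a^i_n$ with $n \geq 0$ could in principle belong to $\overline{L}$ since it converges to $\omega(a_i) \subseteq L$, which would make the bit $\tau^i_n$ undetectable from $\psi(\tau)$. Should this complication appear, I would fall back to the one-sided shift indexed only by the backward coordinates, redefining $\psi$ to always include the full forward orbits of the $a_i$ and varying only the backward coordinates; this produces a semiconjugacy of $2^f|_{\Lambda_r}$ onto a subshift of $\Sigma_r$ whose entropy still grows without bound as $r \to \infty$, which is enough to conclude that $h(2^f) = \infty$.
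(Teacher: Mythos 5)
Your primary construction has the gap you yourself flag, and it is not a cosmetic one: if, say, $f(a_i)$ is already a fixed point (so $f(a_i)=f^2(a_i)=\cdots\in\omega(a_i,f)\subseteq L$), then the bits $\tau^i_n$ with $n\geq 1$ leave $\psi(\tau)$ unchanged, $\psi$ fails to be injective, and a non-injective equivariant surjection from $(\Sigma_r,\sigma)$ onto $\Lambda_r$ only yields the useless inequality $h(2^f|_{\Lambda_r})\leq r\log 2$. Hypotheses (i)--(ii) control only the backward coordinates, so the two-sided conjugacy cannot be salvaged in general and your fallback is mandatory, not optional. A second, smaller point: take the base set to be $\bigcup_{i\leq r}\bigl(\alpha(\{x^i_{-n}\}_{n\in\mathbb{Z}_+},f)\cup\omega(a_i,f)\bigr)$, a finite union of closed $f$-invariant sets, rather than $\overline{L}$; the closure of the full countable union could contain backward orbit points as accumulation points of $\omega(a_j,f)$ for large $j$, which hypothesis (i) does not forbid and which would again make some coordinates unreadable.

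With those corrections your fallback is essentially the paper's argument, phrased without the intermediate symbolic space. The paper fixes $r$, forms $\Lambda=\bigcup_{i<r}\overline{Orb(\{x^i_{-n}\}_{n\in\mathbb{Z}_+},f)}$, and defines a base-level semiconjugacy $\phi:\Lambda\to S_Q$ sending $x^i_{-n}\mapsto b^i_n$ and everything else (including all forward iterates of the $a_i$) to the constant sequence $a$; applying the hyperspace functor gives a factor map $2^{\phi}$ from $2^f$ restricted to $2^{\Lambda}$ onto $2^{\sigma_{S_Q}}$, whose entropy is infinite by the argument of Proposition \ref{Infinite-entropy}. Your version instead reads the backward coordinates directly off a closed set $K$, i.e.\ $\Phi(K)^i_n=1$ iff $x^i_{-n}\in K$, and factors onto the full one-sided shift on $2^r$ symbols; the combinatorial content is identical. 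The verifications you list go through because hypotheses (i) and (ii) force every $x^i_{-n}$ to be an isolated point of $\Lambda$ whose unique $f$-preimage inside $\Lambda$ is $x^i_{-n-1}$ (any coincidence $x^i_{-n}=x^i_{-m}$ or $x^i_{-n}=f^k(a_i)$ would make $x^i_{-n}$ periodic, hence place it in $L$, contradicting (i); coincidences across indices are excluded by (ii)). That isolation-and-unique-preimage check is the one piece of genuine content you still need to write out, since it is exactly where the hypotheses are used.
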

	
\begin{proof}
We will construct a semiconjugacy from some subsystem of $(X,f)$ to the one-side version of sub-shift, see Theorem 13 in \cite{DO}. First, fix an $r\in \mathbb{N}$. For each $i \in \{1,...,r\}$ extend the given negative orbit through $a_i$ to the full $Orb(\{x^{i}_n\}_{n \in \mathbb{Z}_+},f)$. Then define the closed $f$-invariant set as 
$$
\Lambda= \bigcup_{i=0}^{r-1} \overline{Orb(\{x^{i}_n\}_{n \in \mathbb{Z}_+},f)}.
$$
With the notation introduced in Proposition \ref{Infinite-entropy},  we define a map $\phi: \Lambda \rightarrow S_Q$ by
\vspace{0.2cm}
		
$
\phi(x)=\left\{\begin{array}{rc}
b^i_n,&\mbox{if}\quad x=x^i_{-n} \textrm{  for some  }  n \geq 0 \textrm{   and   }  1 \leq i \leq r ;\\
a,&\mbox{otherwise.}\quad
\end{array}\right. 
$ 
\vspace{0.2cm}
		
Clearly $\phi$ is the desired semiconjugacy. This implies that $2^{\Phi}$ is a semiconjugacy from $2^f$ to $2^{\sigma_{S_Q}}$. Therefore, $h(2^f) \geq \log r$ for all $r \in \mathbb{N}$.
\end{proof}

\subsubsection{Case 1: Infinite topological entropy on $C(X)$}
In this section we present a first mechanisms to ensure infinite topological entropy on $C(X)$.\\
	
	
In \cite{AIM} the authors present two examples of dynamical systems defined on dendrites such that the continuum induced map has infinite topological entropy. We are interested in one of these examples and we will present it here.

\textbf{Example 1}	
First consider, in $\mathbb{R}^2$, the points $p=(-1,0)$, $q=(1,0)$ and the sequence $((a_n,0))_{n \in \mathbb{Z}}$ such that $a_0=0$ and, for each $n \in \mathbb{N}$, $a_n= 1-\frac{1}{n+1}$ and $a_{-n}=-a_n$. Note that $a_n < a_{n+1}$ for each $n \in \mathbb{Z}$. Moreover:
$$\lim_{n \rightarrow \infty} a_n=1 \\\ \textrm{   and} \\\ \lim_{n \rightarrow - \infty} a_n= -1.$$
	
Now, given $n \in \mathbb{Z}$, let $L_n$ be the straight line segment $L_n=\{a_n\} \times [0, \frac{1}{|n|+1}]$. We denote the segment $[-1,1] \times \{0\}$ by $pq$. Let 
$$X=pq \cup \left (\bigcup_{n \in \mathbb{Z}}L_n \right).$$

	
Note that $X$ is a dendrite with free arcs. Let $F: X \rightarrow X$ be a homeomorphism with the following properties:
	
\begin{itemize}
\item [i.] $F|_{pq}$ is a homeomorphism from $pq$ onto itself such that $F(p)=p$, $F(q)=q$ and $F((a_n,0))=(a_{n+1},0)$ for each $n \in \mathbb{Z}$ (so the image under $F$ of the arc from $(a_n,0)$ to $(a_{n+1},0)$ is the arc from $(a_{n+1},0)$ to $(a_{n+2},0$);
\item [ii.] for each $n \in \mathbb{Z}$, $F|_{L_n}: L_n \rightarrow L_{n+1}$ is a linear homeomorphism.
\end{itemize} 
	

Note that $Fix(F)=\{p,q\}$. Moreover:
$\omega(p,F)=\{p\}$ and $\omega(x,F)=\{q\}$, for each $x \in X-\{p\}$. Thus $F$ is not transitive. Since $X$ is a dendrite and $F$ is a homeomorphism, by \cite{AEO} the topological entropy of $F$ is zero. Now, since $F$ is a homeomorphism, the induced map $C(F): C(X) \rightarrow C(X)$ is also a homeomorphism, see \cite{IN} . By either Theorem $4.5$ or Theorem $6.2$ in \cite{AIM}, $C(F)$ is not transitive. Note that, $\omega(A,C(F))= \{\{q\}\}$, for each $A \in C(X)$ such that $p \notin A$.
	
The authors in \cite{AIM} constructed a closed subset $\Lambda$ of $C(X)$ such that $C(F)_{|_{\Lambda}}$ is topologically conjugate to \textit{the shift map} defined on the Hilbert cube. For this reason, $C(F)|_{\Lambda}$ has the following properties
	
\begin{itemize}
\item [i.] The homeomorphism $C(F)|_{\Lambda} : \Lambda \rightarrow \Lambda$ is \textit{Devaney chaotic}, i.e., is transitive and periodically dense.
\item [ii.] The topological entropy of $C(F)|_{\Lambda}$ is infinite, so $C(F)|_{\Lambda}$ and $C(F)$ are topologically chaotic. 
\item [iii.] $C(F)|_{\Lambda}$ has uncountable periodic points of each period.
\end{itemize} 
	
We observed that there are hyperbolic dynamical systems with this dendrite. Therefore, we will introduce the definition of special dendrite for homeomorphisms.
	
\begin{definition}
\label{def-sp-den}
Let $f:X \rightarrow X$ be a homeomorphism on a compact metric space $X$. We say that a closed subset $\Lambda \subset X$ is a \textit{Special Dendrite} if there is $k \in \mathbb{N}$ such that $\Lambda$ is $f^k$-invariant and $f^k|_{\Lambda}$ is conjugated to $F$ (homeomorphism defined above). In this case, we say that $f$  \textit{admit a Special Dendrite}.
\end{definition}
	
Let us present some examples of dynamical systems that admit a special dendrite, and note that the base space is a $n$-dimensional manifold with $n\geq 2$.\\

\begin{ex}
The set $MS_2(M^n)$ of Morse-Smale diffeomorphisms on a compact manifold without boundary and with only two fixed points is non-empty, when the manifold is an $n$-dimensional sphere. In this case, the dynamics is very simple: all non-fixed points move from the source to the sink. On a sphere of a given dimension, any two such diffeomorphisms are topologically conjugate. We will show that every Morse-Smale diffeomorphism with only two hyperbolic fixed points admits a special dendrite.
	

	
\begin{proposition}
\label{MS-2}
Let $M$ be a compact, connected and orientable $n$-dimensional manifold without boundary and let $n \geq 2$. If $f \in MS_2(M^n)$ then $f$ admits a special dendrite.
\end{proposition}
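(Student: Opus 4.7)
The plan is to construct the special dendrite $\Lambda$ inside $M$ explicitly as the union of an arc $\Gamma$ joining $q$ to $p$ with countably many transverse ``teeth'' $L_n$ attached along it, and then exhibit a homeomorphism $h:\Lambda\to X$ conjugating $f|_\Lambda$ to $F$. Since $f\in MS_2(M^n)$, we have $\Omega(f)=\{p,q\}$ with $p$ an attractor and $q$ a repeller, so $W^s(p)=M\setminus\{q\}$, $W^u(q)=M\setminus\{p\}$, and therefore every non-fixed point lies in $W^s(p)\cap W^u(q)$. By Hartman--Grobman, fix linearizing neighborhoods $B_p$ and $B_q$ such that $f(\overline{B_p})\subset B_p$ and $f^{-1}(\overline{B_q})\subset B_q$, and let $Q^s=B_p\setminus f(\overline{B_p})$, $Q^u=B_q\setminus f^{-1}(\overline{B_q})$ be the associated fundamental-domain shells.

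First I would construct $\Gamma$. Pick $a_0\in Q^s$ and set $a_n=f^n(a_0)$, so that $a_n\to p$ as $n\to+\infty$ and $a_n\to q$ as $n\to-\infty$. Using $n\geq 2$ (so $Q^s$ has nonempty interior and admits embedded arcs), choose a smooth arc $\eta\subset Q^s$ from $a_0$ to $a_1=f(a_0)$ whose interior misses the finite set of orbit points $\{a_k:k<0,\ a_k\notin B_q\}$. The positive iterates $f^n(\eta)$ lie in the pairwise disjoint shells $f^n(Q^s)$ shrinking to $\{p\}$, so they are automatically disjoint and their diameters tend to zero. For the negative iterates, take $N$ large enough that $a_{-N}\in Q^u$; by shrinking $\eta$ one can arrange that the finitely many ``middle'' iterates $f^{-1}(\eta),\dots,f^{-N+1}(\eta)$ are thin embedded arcs meeting only at the prescribed endpoints $a_n$, and that $f^{-N}(\eta)\subset Q^u$. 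Then the further negative iterates lie in the shrinking shells $f^{-n}(Q^u)$ and decay to $\{q\}$. Setting $\Gamma=\{p,q\}\cup\bigcup_{n\in\mathbb{Z}}f^n(\eta)$ yields an $f$-invariant topological arc from $q$ to $p$ on which $f$ acts by ``shifting'' the distinguished points $a_n\mapsto a_{n+1}$, exactly as $F$ acts on $pq$.

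Next I would attach the teeth. At $a_0$ pick a short smooth arc $L_0$ transverse to $\Gamma$ with $L_0\cap\Gamma=\{a_0\}$, chosen small enough that $L_0\cap f^k(L_0)=\emptyset$ for all $k\neq 0$ (a finite number of non-intersection conditions for bounded $|k|$, trivial for large $|k|$ since the iterates accumulate on $\{p\}$ or $\{q\}$). Define $L_n=f^n(L_0)$. Invariance of $\Gamma$ and injectivity of $f$ give $L_n\cap\Gamma=\{a_n\}$ and $L_n\cap L_m=\emptyset$ for $n\neq m$. The linear contraction of $f$ near $p$ forces $\mathrm{diam}(L_n)\to 0$ as $n\to+\infty$, and the linear contraction of $f^{-1}$ near $q$ forces $\mathrm{diam}(L_n)\to 0$ as $n\to-\infty$, with appropriate scaling of $L_0$ so that the finitely many intermediate teeth are also small. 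Set
\[
\Lambda=\Gamma\cup\bigcup_{n\in\mathbb{Z}}L_n.
\]
Because $L_n$ shrinks to $\{p\}$ and $\{q\}$ at the two ends, $\Lambda$ is compact and $f$-invariant.

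Finally I would build the conjugacy $h:\Lambda\to X$. Send $p,q$ to the corresponding endpoints of $X$, map $\Gamma$ onto the arc $pq$ by an arc homeomorphism sending $a_n$ to the point $(a_n,0)\in pq$ of the example, and map each $L_n\subset\Lambda$ linearly onto the stalk $L_n\subset X$ attached at $(a_n,0)$. The equivariance $h\circ f=F\circ h$ follows segment by segment: on $\Gamma$ both sides realize the shift $a_n\mapsto a_{n+1}$, and on each tooth both $f|_{L_n}$ and $F|_{L_n}$ are homeomorphisms onto $L_{n+1}$ that can be matched up to conjugacy by reparametrizing the $L_n$ (in $X$) by pushforward under $h$, i.e.\ by declaring the parametrization of $L_n\subset X$ to be the one making $h$ a conjugacy — this is legitimate because Definition \ref{def-sp-den} asks only that $f^k|_\Lambda$ be topologically conjugate to $F$, not conjugate via a prescribed parametrization of $X$. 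Thus $\Lambda$ is a special dendrite (with $k=1$) and $f$ admits one. The main obstacle, and the place where dimension $n\geq 2$ is essential, is Step~1: constructing the seed arc $\eta$ so that all its integer iterates are pairwise disjoint except at the matching endpoints $a_n$; in dimension one the linear order on the circle forces forced intersections and the construction fails, which is consistent with the different behavior observed in Theorem~\ref{entropy-zero-continuum}.
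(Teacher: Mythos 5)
Your construction is essentially the paper's own proof: you build the spine from the iterates of an arc joining $a_0$ to $f(a_0)$ inside the fundamental domain $Q^s$ near the sink, attach one tooth there, iterate, close up with $\{p,q\}$, and define the conjugacy piecewise by $F^n\circ G\circ f^{-n}$. The only differences are cosmetic (your separate treatment of the ``middle'' negative iterates is redundant, since the disjointness of the shells $f^n(Q^s)$ already handles all $n\in\mathbb{Z}$, and the paper is more explicit about continuity of the conjugacy at the orbit of the attaching point and at $p,q$), so the argument is correct and follows the same route.
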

	
\begin{proof}
Let $f \in MS_2(M^n)$. Since that, all non-fixed points move from the source $\bar{q}$ to the sink $\bar{p}$, we have $W^s(\bar{p})\bigcap W^u(\bar{q})= M-\{\bar{p},\bar{q}\}$ is an open and connected $n$-dimensional set.  Let $\varepsilon>0$ be sufficiently small such that
\begin{itemize}
\item $B(\bar{p},\varepsilon) \bigcap B(\bar{q},\varepsilon)= \emptyset$.
\item $f(\overline{B(\bar{p},\varepsilon)}) \subset B(\bar{p},\varepsilon)$ and
\item $f^{-1}(\overline{B(\bar{q},\varepsilon)}) \subset B(\bar{q},\varepsilon)$.
\end{itemize}   
Let $Q^s:= \overline{B(\bar{p}, \varepsilon)} - f(\overline{B(\bar{p}, \varepsilon)})$ be a fundamental domain for the stable manifold of $\bar{p}$.

Consider $x \in \partial B(\bar{p}, \varepsilon)$, $f(x) \in \partial f(B(\bar{p}, \varepsilon))$ and compact arcs $\gamma$ and $\beta$ such that:
\begin{itemize}
\item the end points of $\gamma$ are the points $x$ and $f(x)$  and 
$$\gamma-\{x, f(x)\} \subset int(Q^s)$$ 

\item one of the end points of $\beta$ is $x$ and 
$$\beta \subset Q^s \textrm{ and } \gamma \cap \beta=\{x\}$$
\end{itemize}

%
%

\textbf{Claim 1:}
For all $n \neq m \in \mathbb{Z}$, $f^n(\gamma) \cap f^m(\gamma)$ is an empty set or one point of the orbit of $x$.

\begin{proof}
Suppose there are $n \neq m \in \mathbb{Z}$ and $ f^n(\gamma) \cap f^m(\gamma)$ is not an empty set. Then there is $ y \in f^n(\gamma) \cap f^m(\gamma)$. We can assume $n>m$. Then $ f^{-m}(y) \in f^{n-m}(\gamma) \cap \gamma.$ If $n-m>1$ then $f^{n-m}(\gamma) \cap \gamma = \emptyset$ since $\gamma \subset Q^s$, therefore $n-m=1$ and $ f^{-m}(y) \in f(\gamma) \cap \gamma=\{f(x)\}$. Thus $y=f^{m+1}(x)$ .
\end{proof}

\textbf{Claim 2:}
There is $t_0 \in \mathbb{N}$ such that 
$$f^{-t_0}(\gamma \cup \beta ) \subset B(\bar{q},\varepsilon)\\\ \textrm{and}\\\ f^{t_0}(\gamma \cup \beta) \subset B(\bar{p},\varepsilon).$$

\begin{proof}
It is sufficient to prove that there is $t_0  \in \mathbb{N}$ such that 
$$f^{-t_0}(\gamma \cup \beta) \subset B(\bar{q},\varepsilon) .$$
Suppose that, for all $k \geq 1$ there is $y_k$ in $\gamma \cup \beta$ such that $f^{-k}(y_k)$ is not contained in $B(\bar{q},\varepsilon)$. Then we obtain two sequence $\{y_k\}_{k \geq 1}$ in $\gamma \cup \beta$ and $\{f^{-k}(y_k)\}_{k \geq 1}$ in $B(q,\varepsilon)^c$. Taking a subsequence, if it is necessary, we have $\lim\limits_{k \rightarrow \infty} y_k= \bar{x} \in \gamma \cup \beta$. For $\bar{x}$, there exists $m_0 \in \mathbb{N}$ such that $f^{-m_0}(\bar{x}) \in B(\bar{q},\varepsilon)$. Let $\varepsilon_1 >0$ be such that $B(f^{-m_0}(\bar{x}), \varepsilon_1) \subset B(\bar{q},\varepsilon)$, so since $f$ is continuous, there exists, $\delta>0$ such that 
$$
f^{-m_0}(B(\bar{x},\delta)) \subset B(f^{-m_0}(\bar{x}), \varepsilon_1).
$$
Now, let $n_0 > m_0$ be such that  $y_{n_0} \in B(\bar{x},\delta)$  and $f^{-m_0}(y_{n_0}) \in B(f^{-m_0}(\bar{x}),\varepsilon_1)$. But $ B(f^{-m_0}(\bar{x}),\varepsilon_1) \subset B(\bar{q},\varepsilon)$. Thus, we have $f^{-(m_0+t)}(y_{n_0}) \in B(\bar{q},\varepsilon)$ for all $t \geq 1$, which is a contradiction. 
\end{proof}


\textbf{Claim 3:}
For all $n \neq m \in \mathbb{Z}$, $f^n(\beta) \cap f^m(\beta)= \emptyset$.

\begin{proof}
As in the proof of Claim 2, since $Q^s$ is a fundamental domain we have that if $n\neq 0$ then $f^n(Q^s)\cap Q^s=\emptyset$. The claim follows since $\beta\in Q^s$.
\end{proof}

Now we will construct a special dendrite as following:
		
Let $\Lambda= \overline{\bigcup\limits_{n \in \mathbb{Z}}f^n(\gamma \cup \beta)}=  \bigcup\limits_{n \in \mathbb{Z}}f^n(\gamma \cup \beta) \bigcup \{\bar{p},\bar{q}\}$ be a compact and $f$-invariant set.

\textbf{Claim 4:}
$f|_{{\Lambda}}$ is conjugate to $F$ ( see Example 1). 

\begin{proof}
Let $A_0=\gamma\cup\beta-\{f(x)\}$ and $B_0=L_0\cup (-1/2,0]\times \{0\}$. Let $y$ be the endpoint of $\beta-\{x\}$.

We fix a homeomorphism $G:A_0\to B_0$ such that, $G(x)=(0,0)$, $G(\gamma-{f(x)})=(-1/2,0]\times \{0\}$, $G(\beta)=L_0$ and $G(y)=(0,1)$.

If $n\in\mathbb{Z}$ we set $A_n=f^n(A_0)$ and $B_n=F^n(B_0)$. By the claims, if $n\neq m$ then $A_n\cap A_m=\emptyset$. Also, by Example 1  if $n\neq m$ then $B_n\cap B_m=\emptyset$.

Thus, we can define a homeomorphism $H:A_n\to B_n$ as $H=F^n\circ G\circ f^{-n}$. Finally, we define $H(\bar{p})=p$ and $H(\bar{q})=q$. This define a bijection $H: \Lambda \rightarrow pq \cup \left (\bigcup_{n \in \mathbb{Z}}L_n \right)$. Moreover, $H$ is continuous in $\Lambda-(\cup_{n\in \mathbb{Z}}\{f^n(x)\}\cup\{\bar{p},\bar{q}\})$.
\end{proof}

\textbf{Claim 5:}
$H$ is continuous at $f^n(x)$ for every $n\in\mathbb{Z}$.
\begin{proof}
We first prove the claim for $n=1$, the same argument holds in the general case. 

If $z_n\in\gamma-\{f(x)\}$ and $z_n\to f(x)$, then $H(z_n)=G(z_n)\to (-1/2,0)$. If $z_n\in A_1$ and $z_n\to f(x)$ then by definition \
$$H(z_n)=F\circ G\circ f^{-1}(z_n)\to F( G(f^{-1}(f(x)))=F(G(x))=F( (0,0))=(-1/2,0).$$
Thus $H$ is continuous in $f(x)$.
\end{proof}

\textbf{Claim 6:} $H$ is continuous at $\bar{p}$ and $\bar{q}$.

\begin{proof}
Let $z_n\to \bar{p}$ and $\eps>0$. Let $k_0$ such that $\bigcup_{k\geq k_0}B_k\subset B(p,\eps)$. Since $\bar{p}$ is a sink, there exists $n_0$ such that if $n\geq n_0$ then $z_n\in \bigcup_{k\geq k_0}A_k\cup\{\bar{p}\}$. Thus $H(z_n)\in  B(p,\eps)$ for $n\geq n_0$. Then $H$ is continuous in $\bar{p}$. The continuity at $\bar{q}$ is analogous. 
\end{proof}




		



Thus, $H$ is a homeomorphism since is a continuous bijection on a compact set. Moreover, by construction $H \circ f = F \circ H$. Therefore, $f$ admits a special dendrite.
\end{proof}
\end{ex}

\begin{ex} 
Let $MS_3^*(M^n)$ be the subclass of $MS_3(M^n)$ of diffeomorphisms with only three periodic points. The first question about this subclass of diffeomorphisms is if this set is non-empty. It is well known that there are no Morse-Smale diffeomorphisms on $2$-manifolds with exactly three periodic points. In \cite{G-Z-M}, the authors showed that, there are no Morse-Smale diffeomorphisms on $3$-manifolds whose set of non-wandering points consists of exactly three periodic points. In \cite{Z-M}, the existence of closed $n$-manifolds with $n \geq 4$ admitting Morse functions with precisely three critical points was proved, and such manifolds were studied. Thus, in the case $n \geq 4$, there exists Morse-Smale diffeomorphisms with precisely three periodic points. In \cite{Z-M}, the authors showed that any such diffeomorphism has precisely one saddle, one sink and one source. Also, they showed that if $n$ is an even number and $n \geq 4$, then the unstable and the stable separatrix are $n/2$-dimensional spheres. Thus we obtain the following result.
	
\begin{proposition}
Let $M^n$ be a compact and connected, orientable $n$-dimensional manifold with $n \geq 4$. Let $f:M^n \to M^n$ be a Morse-Smale diffeomorphism with only three periodic points, an attractor $p$, a repeller $q$ and a saddle $\sigma$. Then $f$ admits a special dendrite. 
\end{proposition}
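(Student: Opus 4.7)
The plan is to adapt the proof of Proposition \ref{MS-2} (the North Pole--South Pole case) by working inside the open dense set
$$W := W^s(p) \cap W^u(q),$$
so as to completely avoid the invariant separatrices of the saddle $\sigma$. Since $p$ is the unique attractor and $q$ the unique repeller, $M = \bigcup W^s(p_i) = \bigcup W^u(p_i)$ ranges only over $\{p, q, \sigma\}$, so the complement of $W$ in $M \setminus \{p,q,\sigma\}$ is the (lower-dimensional, by the results of \cite{Z-M}) union $W^u(\sigma) \cup W^s(\sigma)$. In particular $W$ is open, nonempty, and for every $y \in W$ one has $\omega(y,f) = \{p\}$ and $\alpha(y,f) = \{q\}$, which is precisely the dynamical input that the MS-2 argument used when it invoked ``all non-fixed points move from source to sink''.

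First I would apply Hartman--Grobman to fix $\varepsilon>0$ small enough that $B(p,\varepsilon)$, $B(q,\varepsilon)$ and a neighborhood of $\sigma$ are pairwise disjoint, $f(\overline{B(p,\varepsilon)}) \subset B(p,\varepsilon)$ and $f^{-1}(\overline{B(q,\varepsilon)}) \subset B(q,\varepsilon)$, and set $Q^s := \overline{B(p,\varepsilon)} \setminus f(\overline{B(p,\varepsilon)})$, a fundamental domain for $W^s(p)$. Because $W$ is open and dense, one can pick $x \in \partial B(p,\varepsilon) \cap W$, and because $W \cap Q^s$ is an open set in a manifold of dimension $n \geq 4 \geq 2$, one can then choose: (a) a compact arc $\gamma \subset Q^s \cap W$ with endpoints $x$ and $f(x)$ and $\gamma \setminus \{x,f(x)\} \subset \operatorname{int}(Q^s)$; and (b) a compact arc $\beta \subset Q^s \cap W$ with one endpoint at $x$, transverse to $\gamma$ and meeting it only at $x$.

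With these choices the four claims from the proof of Proposition \ref{MS-2} carry over verbatim: the analogues of Claims 1 and 3 (disjointness of the iterates of $\gamma$ and of $\beta$) follow because $Q^s$ is a fundamental domain and both arcs lie in $Q^s$; the analogue of Claim 2 (that some forward iterate of $\gamma \cup \beta$ lies entirely in $B(p,\varepsilon)$ and some backward iterate lies entirely in $B(q,\varepsilon)$) now uses the defining property of $W$: every point of $\gamma \cup \beta \subset W$ has $\omega$-limit $\{p\}$ and $\alpha$-limit $\{q\}$, so the same compactness/continuity argument applies. Setting
$$\Lambda := \overline{\bigcup_{n\in\mathbb{Z}} f^n(\gamma \cup \beta)} = \bigcup_{n \in \mathbb{Z}} f^n(\gamma \cup \beta) \cup \{p,q\}$$
yields a compact, $f$-invariant set, and the conjugacy $H : \Lambda \to pq \cup \bigcup_n L_n$ is built exactly as in Claims 4--6 of the MS-2 proof: fix a base homeomorphism from $\gamma \cup \beta$ onto the central piece $L_0 \cup (-1/2,0] \times \{0\}$ of the model dendrite, transport it by $F^n \circ (\cdot) \circ f^{-n}$ on each $f^n(\gamma \cup \beta)$, and send $p \mapsto p$, $q \mapsto q$; the only continuity points needing separate verification are the orbit of $x$ and the two fixed points, and these are handled by the contracting/expanding behavior near $p$ and $q$.

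The step I expect to be the main obstacle is verifying that nothing in the construction is poisoned by the presence of the saddle. Concretely, one needs to know that $\gamma \cup \beta$ (and hence all its iterates) stays away from $W^s(\sigma) \cup W^u(\sigma)$, since otherwise the omega- and alpha-limit statements used in the analogue of Claim 2 would fail, and the closure $\Lambda$ could contain $\sigma$ and spurious accumulation arcs. This is exactly why the arcs are required to lie in $W$: since $W$ is $f$-invariant, all iterates of $\gamma \cup \beta$ remain in $W$, and the closure picks up only $\{p,q\}$. Once this is in place, $\Lambda$ is $f$-invariant and $f|_\Lambda$ is conjugate to $F$, so by Definition \ref{def-sp-den} (with $k=1$) the diffeomorphism $f$ admits a special dendrite.
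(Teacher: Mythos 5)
Your argument is correct, but it takes a genuinely different route from the paper's. The paper's proof is a two-line reduction: by the Main Theorem of \cite{Z-M}, the separatrix closures $S_p=W^u(\sigma)\cup\{p\}$ and $S_q=W^s(\sigma)\cup\{q\}$ are $n/2$-dimensional spheres on which $f$ restricts to a two-fixed-point (north--south) dynamics, so Proposition \ref{MS-2} is applied to this restriction (source $\sigma$, sink $p$, dimension $n/2\geq 2$) and the special dendrite lives inside the invariant sphere. You instead build the dendrite in the complementary open heteroclinic region $W=W^s(p)\cap W^u(q)=M\setminus(S_p\cup S_q)$, connecting $q$ to $p$ directly and bypassing the saddle. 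Both are adaptations of the MS-2 construction, and the trade-off is in what must be checked: the paper leans on the full sphere structure from \cite{Z-M} (and tacitly on the fact that the MS-2 construction only needs the topological north--south picture, since $f|_{S_p}$ is a priori just a homeomorphism of a topological sphere), while you only use that $S_p\cup S_q$ is closed of dimension $n/2\leq n-2$. That dimension count is doing real work in two places you should make explicit: it guarantees $\partial B(p,\varepsilon)\not\subset S_p$ (so your point $x$ exists), and it guarantees that $\operatorname{int}(Q^s)\setminus(S_p\cup S_q)$ is still connected, so that $x$ and $f(x)$ can be joined by an arc inside $W$ --- saying that $W\cap Q^s$ is an open set in a manifold of dimension $\geq 2$ does not by itself put the two points in one component; you need the standard fact that deleting a closed set of codimension at least $2$ from a connected manifold preserves connectedness (the same Kuratowski-type fact the paper invokes when constructing arcs in Theorem \ref{Theorem-C(f)-infinite}). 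With that supplied, your proof in effect verifies directly the hypotheses of Proposition \ref{component-dendr-sp}, which is a legitimate alternative to the paper's sphere-restriction argument.
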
 
	
\begin{proof}
Without loss of generality, we can assume that all periodic points of the diffeomorphism $f$ are fixed (otherwise we pass to some iteration of $f$). By the Main Theorem in \cite{Z-M}, we have that 
$$
W^u(\sigma) \cup \{p\}=S_p \hspace{0.3cm} \textrm{and} \hspace{0.3cm} W^s(\sigma) \cup \{q\}=S_q
$$
are $n/2$-dimensional spheres. In this case the dynamic is very simple: all non-fixed points move from the source to the saddle or from the saddle to the sink. Therefore, by Proposition ~\ref{MS-2}, we have that $f$ admits a special dendrite.
\end{proof}

\begin{proposition}
\label{component-dendr-sp}
Let $M^n$ be a compact and connected, orientable $n$-dimensional manifold with $n \geq 4$. Let $f:M^n \to M^n$ be a Morse-Smale diffeomorphism with an attractor point $p$ and a repeller point $q$ such that $W^s(p) \cap W^u(q) \neq \emptyset$ has a $f^k$-invariant connected component with dimension at least $2$. Then $f$ admits a special dendrite.    
\end{proposition}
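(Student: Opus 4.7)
The plan is to reduce to the construction of Proposition \ref{MS-2} by working inside the invariant connected component $N$. After replacing $f$ by $f^k$, I may assume that the component $N$ of $W^s(p)\cap W^u(q)$ with $\dim N\geq 2$ is $f$-invariant. The strategy is then to carry out the arc construction used in Proposition \ref{MS-2} entirely inside the open set $N$, propagate it under the dynamics to form a compact invariant set $\Lambda\subset M$, and produce a conjugacy between $f|_\Lambda$ and the homeomorphism $F$ of Example~$1$.

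First I would choose a small open ball $B^s$ around $p$ with $f(\overline{B^s})\subset B^s$, so that $Q^s=\overline{B^s}\setminus f(B^s)$ is a compact fundamental domain for $W^s(p)$. Because every point of $N$ converges to $p$ under forward iteration and to $q$ under backward iteration, $N$ has points on both sides of $\partial B^s$, and being connected it must intersect $\partial B^s$, yielding a point $x\in \partial B^s\cap N$. Using that $N$ is open in $M$ and has topological dimension at least $2$, I would then select an arc $\gamma\subset N\cap\overline{Q^s}$ joining $x$ to $f(x)$ with $\gamma\setminus\{x,f(x)\}\subset N\cap\mathrm{int}(Q^s)$, together with a ``branching'' arc $\beta\subset N\cap Q^s$ with one endpoint on $\gamma\setminus\{x,f(x)\}$ and satisfying $\gamma\cap\beta=\{y\}$.

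With these arcs in hand, set
$$\Lambda=\overline{\bigcup_{n\in\mathbb{Z}}f^n(\gamma\cup\beta)}=\{p,q\}\cup\bigcup_{n\in\mathbb{Z}}f^n(\gamma\cup\beta).$$
The analogues of Claims $1$--$6$ from the proof of Proposition \ref{MS-2} transfer verbatim: disjointness of the iterates of $\gamma$ (respectively $\beta$) off the orbit of $x$ follows from $Q^s$ being a fundamental domain; accumulation of orbits at $p$ and $q$ uses the attractor/repeller dynamics; and the $f$-invariance of $N$ ensures that the entire orbit $\bigcup_n f^n(\gamma\cup\beta)$ stays inside $N$, which rules out spurious intersections with other components of $W^s(p)\cap W^u(q)$. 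I would then define a homeomorphism $H:\Lambda\to pq\cup\bigcup_{n\in\mathbb{Z}}L_n$ by fixing a homeomorphism $G$ between $\gamma\cup\beta$ and the base piece $((-1/2,0]\times\{0\})\cup L_0$ sending $x\mapsto(0,0)$ and $\beta\to L_0$, extending equivariantly as $H|_{f^n(\gamma\cup\beta)}=F^n\circ G\circ f^{-n}$, and setting $H(p)=p$, $H(q)=q$. Continuity at the vertices $f^n(x)$ and at the two fixed points is verified exactly as in Proposition \ref{MS-2}, and then $H\circ f=F\circ H$ by construction.

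The main obstacle is the initial geometric selection of $\gamma$ and $\beta$ inside $N$: the branching arc $\beta$ must be disjoint from every $f^n(\gamma)$ with $n\neq 0$ and must meet $\gamma$ only at the single point $y$. The hypothesis $\dim N\geq 2$ is the essential input, as it provides transverse room inside the invariant component to attach $\beta$ to $\gamma$ while keeping both arcs in $N\cap Q^s$; in a one-dimensional invariant component such a branching arc does not exist. Once the two arcs are produced, the rest of the argument is a direct translation of the proof of Proposition \ref{MS-2}, and the resulting $\Lambda$ is the desired special dendrite for $f^k$.
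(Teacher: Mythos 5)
Your proposal is correct and follows essentially the same route as the paper: the paper's own proof is a one-line reduction stating that, since the $f^k$-invariant component has dimension at least $2$, the arc construction of Proposition \ref{MS-2} can be repeated inside it. You have simply written out that reduction in full (passing to $f^k$, locating $x\in\partial B^s\cap N$, building $\gamma$ and $\beta$ in $N\cap Q^s$, and transporting Claims $1$--$6$), which matches the intended argument.
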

	
\begin{proof}
Since that the set $W^s(p) \cap W^u(q) \neq \emptyset$ has dimension at least $2$ and it is $f^k-$ invariant, we can construct a special dendrite with the same arguments in Proposition ~\ref{MS-2}. 
\end{proof}
\end{ex}


\begin{ex} 	
Consider the torus $T^2 \subset \mathbb{R}^3$ and let $X = grad (t)$ where $t$ is the height function of points of $T^2$ above the horizontal plane. This	vector field has four singularities $p_1$, $p_2$, $p_3$, $p_4$ where $p_1$ is a sink, $p_2$ and $p_3$ are saddles and $p_4$ is a source. The stable manifold of $p_2$ intersects the unstable manifold of $p_3$ nontransversally. The diffeomorphism time-one map of $X$ admits a special dendrite, see definition in \cite{PM}.\\
If It is destroyed the intersection nontransversally with a small perturbation of the field $X$, then the resulting field $Y$ is a Morse-Smale field and therefore not be equivalent to $X$. The diffeomorphism time-one map of $Y$ admits a special dendrite.\\
\end{ex}

Homeomorphisms that contain an special dendrite in the base space generate infinite topological entropy in the hyperspace. 
	
\begin{theorem}
\label{criterio-special-chaotic-dendrite}
\textit{Let $f: X \rightarrow X$ be homeomorphism on a continuum metric space $X$. If $f$ admits a special dendrite, then $h(C(f))= \infty$ and therefore $h(2^f)=\infty$}.
\end{theorem}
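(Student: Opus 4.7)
The plan is to leverage the fact, established by Acosta, Illanes and Méndez-Lango in \cite{AIM} and recalled in Example 1, that the induced continuum map $C(F)$ of the special homeomorphism $F$ on the dendrite $pq\cup\bigcup_{n\in\mathbb{Z}}L_n$ contains a closed invariant subsystem $\mathcal{L}\subset C(X_F)$ on which $C(F)$ is topologically conjugate to the shift on the Hilbert cube, so that $h(C(F))\geq h(C(F)|_{\mathcal{L}})=\infty$. The strategy is to transport this infinite entropy to $C(f)$ through the special dendrite $\Lambda$ and then to $2^f$ via the inclusion $C(X)\hookrightarrow 2^X$.

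First, by Definition \ref{def-sp-den}, there exist $k\in\mathbb{N}$, a closed $f^k$-invariant set $\Lambda\subset X$, and a homeomorphism $H:\Lambda\to X_F$ (where $X_F$ is the dendrite of Example 1) such that $H\circ(f^k|_{\Lambda})=F\circ H$. Next, I would observe the standard hyperspace principle: whenever two maps are topologically conjugate, so are their induced continuum maps (this is recorded in the Remark after the definition of $C(f)$). Concretely, the map $C(H):C(\Lambda)\to C(X_F)$, $A\mapsto H(A)$, is a homeomorphism and satisfies
\[
C(H)\circ C(f^k|_{\Lambda})=C(F)\circ C(H).
\]
Moreover, since $\Lambda$ is closed in $X$, $C(\Lambda)$ is closed in $C(X)$ and is invariant under $C(f^k)$, so $C(f^k)|_{C(\Lambda)}=C(f^k|_{\Lambda})$.

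Combining these, the invariant subsystem $C(H)^{-1}(\mathcal{L})\subset C(\Lambda)\subset C(X)$ of $C(f^k)$ is conjugate to $(C(F)|_{\mathcal{L}})$, and hence has infinite topological entropy. By monotonicity of entropy on closed invariant subsystems,
\[
h(C(f^k))\;\geq\; h\bigl(C(f^k)|_{C(H)^{-1}(\mathcal{L})}\bigr)\;=\;h(C(F)|_{\mathcal{L}})\;=\;\infty.
\]
Using the general identity $h(g^k)=k\,h(g)$ applied to $g=C(f)$, we get $k\,h(C(f))=h(C(f)^k)=h(C(f^k))=\infty$, and since $k<\infty$ this forces $h(C(f))=\infty$.

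Finally, for the induced hyperspace map $2^f$, I note that $C(X)$ embeds as a closed $2^f$-invariant subset of $2^X$ and that $2^f|_{C(X)}=C(f)$. Hence once more by monotonicity, $h(2^f)\geq h(2^f|_{C(X)})=h(C(f))=\infty$, which completes the argument. The only non-routine point in this plan is verifying that the conjugacy $H:\Lambda\to X_F$ actually lifts to a conjugacy at the level of continuum hyperspaces; this reduces to the classical fact that $A\mapsto H(A)$ is a Hausdorff-continuous bijection $C(\Lambda)\to C(X_F)$, which follows from the uniform continuity of $H$ and $H^{-1}$ on the compact set $\Lambda$, so no real obstacle arises.
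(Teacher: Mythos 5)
Your proposal is correct and follows essentially the same route as the paper: pass to the special dendrite $\Lambda$, lift the conjugacy $f^k|_{\Lambda}\sim F$ to a conjugacy of the induced continuum maps, and import the infinite entropy of $C(F)$ from the Hilbert-cube shift subsystem of \cite{AIM}. Your write-up is in fact more careful than the paper's, which omits the step $h(C(f^k))=k\,h(C(f))$ and contains a slip ($h(f)\geq h(C(F))$ where $h(C(f))$ is meant).
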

	
\begin{proof}
Since that $f$ admits a special dendrite, there exist a closed subset $\Lambda \subset X$ and $k \in \mathbb{N}$ such that $\Lambda$ is $f^k$-invariant and $f^k|_{\Lambda}$ is conjugate to $F$. So the induced continuum systems $C(f^k|_{\Lambda})$ and $C(F)$ are conjugate too (see \cite{RC}, Theorem $4$). Therefore, we have that $h(f) \geq h(C(F))=\infty$. 
\end{proof}


\subsubsection{Case 2: Infinite topological entropy on $C(X)$}	
First we give an example of a dynamical systems with zero topological entropy and  such that  the continuum map has infinite topological entropy. Let $(Cone(S), \sigma_c)$ the Example 15, in \cite{DO}. The authors showed that $\sigma_c$ has zero topological entropy and the induced continuum map  $C(\sigma_c)$ has positive topological entropy. We will show the topological entropy of $C(\sigma_c)$ is infinite.\\
	
	
\begin{proposition}
Let $(Cone(S), \sigma_c)$ be the system in example above. Then the topological entropy of $C(\sigma_c)$ is infinite. 
\end{proposition}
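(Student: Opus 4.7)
The plan is to embed the zero-entropy hyperspace system of Proposition \ref{Infinite-entropy} into the continuum hyperspace of $Cone(S_Q)$ through a natural ``cone-segment'' construction, and then transfer infinite entropy along the embedding. The key feature of $Cone(S_Q)$ is that every collection of cone segments through the apex automatically forms a connected set, so closed subsets of $S_Q$ give rise to continua in $Cone(S_Q)$; this is precisely what allows continuum entropy to be promoted from hyperspace entropy.

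Let $v$ denote the apex of $Cone(S_Q)$ and, for $x \in S_Q$, let $[v,x]$ be the cone segment joining $v$ and $x$. From the construction of $\sigma_c$ in Example 15 of \cite{DO} one has $\sigma_c(v)=v$ and $\sigma_c([v,x]) = [v,\sigma_{S_Q}(x)]$. I define
$$
\Phi \colon 2^{S_Q} \longrightarrow C(Cone(S_Q)), \qquad \Phi(T) := \{v\} \cup \bigcup_{x \in T}[v,x].
$$
Each $\Phi(T)$ is connected (every segment contains $v$) and closed (since $T$ is closed and segments vary continuously with their endpoints), so $\Phi(T)\in C(Cone(S_Q))$. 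The map $\Phi$ is injective because $\Phi(T)\cap S_Q = T$.

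The first technical step is to verify that $\Phi$ is continuous in the Hausdorff metrics: if $T_n \to T$ in $2^{S_Q}$ then, using the fact that $x \mapsto [v,x]$ is uniformly continuous from $S_Q$ into $2^{Cone(S_Q)}$, one checks $d_H(\Phi(T_n),\Phi(T)) \to 0$. Once continuity is established, compactness of $2^{S_Q}$ yields that $\Phi$ is a topological embedding onto a compact set $\mathcal{K}:= \Phi(2^{S_Q}) \subset C(Cone(S_Q))$. The identities satisfied by $\sigma_c$ then give
$$
C(\sigma_c)(\Phi(T)) = \{v\} \cup \bigcup_{x \in T}[v,\sigma_{S_Q}(x)] = \Phi(\sigma_{S_Q}(T)),
$$
so $\mathcal{K}$ is $C(\sigma_c)$-invariant and $\Phi$ is a topological conjugacy between $(2^{S_Q},2^{\sigma_{S_Q}})$ and $(\mathcal{K},C(\sigma_c)|_{\mathcal{K}})$. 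By monotonicity of topological entropy under restriction to closed invariant subsystems together with Proposition \ref{Infinite-entropy},
$$
h(C(\sigma_c)) \; \geq \; h\bigl(C(\sigma_c)|_{\mathcal{K}}\bigr) \; = \; h(2^{\sigma_{S_Q}}) \; = \; \infty.
$$

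The main obstacle I anticipate is the verification of continuity of $\Phi$: the equivariance, invariance and injectivity properties are immediate from the definition, but Hausdorff-continuity of the union map $T \mapsto \bigcup_{x \in T}[v,x]$ depends on the specific metric placed on $Cone(S_Q)$ in \cite{DO}. This should reduce to the standard fact that, for any reasonable cone metric, the endpoint-to-segment map $x \mapsto [v,x]$ is uniformly continuous, providing the required uniform control when passing from convergence in $2^{S_Q}$ to convergence in $C(Cone(S_Q))$.
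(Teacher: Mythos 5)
Your argument is correct, but it takes a genuinely different route from the paper's. You embed the full hyperspace system $(2^{S_Q},2^{\sigma_{S_Q}})$ into $C(Cone(S))$ by sending a closed set $T$ to the union of the complete fibers over its points (all of which meet at the apex, hence form a continuum), and then import the conclusion $h(2^{\sigma_{S_Q}})=\infty$ from Proposition \ref{Infinite-entropy}; the entropy in your construction is generated by the infinitely many symbols $1/p$, each fiber being either wholly present or wholly absent in the continuum. The paper instead works only over the single symbol $p=1$: it restricts to the invariant sub-cone $Cone(S_1)$ with $S_1=\{a\}\cup\{b^1_n\}_{n\in\mathbb{Z}}$, takes $\Lambda$ to be the set of continua containing the apex $[1]$, and records for each spike $L_n=\{b^1_n\}\times[0,1]$ the depth $t_n=1-\min(\pi_2(L_n\cap A))$ to which $A$ penetrates it; this yields a factor map onto the shift on the Hilbert cube $\prod_{n\in\mathbb{Z}}[0,1]$, whose entropy is infinite. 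In short, the paper extracts infinite entropy from the continuum of possible depths along a single $\mathbb{Z}$-indexed family of arcs, whereas you extract it from binary choices over an infinite alphabet; your version has the advantage of reusing Proposition \ref{Infinite-entropy} verbatim and of producing an actual conjugacy onto a closed invariant subsystem rather than a semiconjugacy, at the cost of the verification (routine, and correctly flagged by you) that $T\mapsto\{v\}\cup\bigcup_{x\in T}[v,x]$ is continuous for the Hausdorff metrics, which follows from uniform continuity of $x\mapsto[v,x]$ in any standard cone metric.
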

	
\begin{proof}
Let $S_1= \{a, b^1_n\}_{n \in \mathbb{Z}}$. Given $n \in \mathbb{Z}$, let $L_n= \{b^1_n\} \times [0,1]$. Note that, $Cone(S_1)$ is a compact and connected $\sigma_c$-invariant set. We denote $[1]= S \times \{1\}$, the fixed point in $Cone(S_1)$. Now, consider the following set:
$$
\Lambda=\{A \in C(Cone(S_1)): [1] \in A\}
$$
Note that $\Lambda$ is a closed subset of $C(Cone(S))$. Since $\sigma_c([1])=[1]$, $\Lambda$ is strongly invariant under $C(\sigma_c)$. Consider $\sigma: Q \rightarrow Q$ \textit{the shift map} defined on a Hilbert Cube, $Q=\Pi_{n \in \mathbb{Z}} [0,1]$. We will show the induced map $C(\sigma_c)|_{\Lambda}$ is semi-conjugated  to \textit{the shift map} $\sigma$ defined on the Hilbert cube. For $A \in \Lambda$, let $\phi(A)= \hat{(t)}= (t_n)_{n \in \mathbb{Z}}$, where $t_n= 1- \min (\pi_2(L_n \cap A))$, for every $n \in \mathbb{Z}$. In this way we have a function $\phi: \Lambda \rightarrow Q$. Note that, $\phi$ is continuous and onto. Moreover $\phi \circ C(\sigma_c)|_{\Lambda} = \sigma^{-1} \circ \phi$ since $\sigma_c(b_n^i,a)=(b_{n+1}^i,a)$ for any $(b_n^i,a)$ in $Cone(S)$. Thus, the topological entropy of $C(\sigma_c)$ is greater than or equal to the topological entropy of $\sigma$. Since the topological entropy of $\sigma$ is infinite, by Theorem $7.6$ in \cite{AIM}, we conclude that the topological entropy of $C(\sigma_c)$ is infinite.
\end{proof}

	
Now, Let $f:M \rightarrow M$ be a homeomorphism on a continuum metric space $M$, with three fixed points $p$, $q$ and $\sigma$. Fix $r \in \mathbb{N}$ and suppose that there exist $a_0, a_1,..., a_{r-1} \in M$ with the following property: $\lim_{n\rightarrow \infty}f^n(a_i)= p$ and $\lim_{n\rightarrow -\infty}f^n(a_i)= q$ for every $i \in \{0,...,r-1\}$. Also, suppose there exist arcs $\gamma_i$ joining the points $a_i$ and $\sigma$ such that
$$
f^{n_1}(\gamma_{i_1}) \cap f^{n_2}(\gamma_{i_2}) = \{ \sigma \} \hspace{0.2cm} \textrm{for all} \hspace{0.2cm} (i_1,n_1) \neq (i_2,n_2). 
$$
	
In this section, we will denote that $f^n(\gamma_i)= \gamma_{i,n}$ and $\hat{\gamma}_{i,n}:=\gamma_{i,n} \backslash \{ \sigma\}.$ 
	
\begin{definition}
We say that the sequence $\{f^n(\gamma_i) \}^{i \in \{0,...,r-1\}}_{n \in \mathbb{Z}}$ \textit{ is Self-Accumulated} if there exist $x \in \bigcup_{n \in \mathbb{Z}}f^n( \gamma_0 \cup \gamma_1 \cup \ldots \cup \gamma_{r-1})$, a sequence of points $\{x_j\}_{j \in \mathbb{N}} \subset \gamma_0 \cup \ldots \cup \gamma_{r-1}$ and $\{k_j\}_{j \in \mathbb{N}} \subset \mathbb{Z}$, $|k_j| \rightarrow \infty$ when $j \rightarrow \infty$, such that $f^{k_j}(x_j) \rightarrow x$ when $j \rightarrow \infty$.
\end{definition}
	
\textbf{Remark.}
\label{OBS2}
If the sequence $\{f^n(\gamma_i)\}^{i \in \{0,...,r-1\}}_{n \in \mathbb{Z}}$ is not self-accumulated then, for any $(i_0,n_0) \in \{0,...,r-1\} \times \mathbb{Z}$ and each $x \in \hat{\gamma}_{i_0,n_0}$, there exists $\varepsilon_0 >0$ such that 
$$
B(x, \varepsilon_0) \cap \gamma_{i,n} = \emptyset \hspace{0.2cm} \textrm{for all} \hspace{0.2cm} (i,n) \neq (i_0,n_0). 
$$

From now, in this section, we will assume that $M$ is a continuum metric space and $f:M \rightarrow M$ is a homeomorphism with three fixed points $p$, $q$ and $\sigma$. Besides that, we suppose that there exist an infinite countable set $A=\{a_0,a_1,...\} \subset M$ with $\alpha(a_i)=\{q\}$ and $\omega(a_i)= \{p\}$ for all $i \geq 0$, and a sequence of arcs $\{\gamma_i\}_{i \geq 0}$, where $\gamma_i$ joins the points $a_i$ and $\sigma$ such that for every $r \in \mathbb{N}$ the sequence $\{f^n(\gamma_i) \}^{i \in \{0,...,r-1\}}_{n \in \mathbb{Z}}$ is not self-accumulated.  For all $i \in \{0,...,r-1\}$ we set 
$$
\Lambda_i= \{x \in M: \exists \\\ n_j \rightarrow \infty \textrm{  and} \\\ x_{i,j} \in \gamma_{i,n_j} \textrm{  such that} \\\ x_{i,j} \rightarrow x  \textrm{ when} \\\ j \rightarrow \infty \}
$$
and
$$
\Gamma_i= \{x \in M: \exists \\\ n_j \rightarrow -\infty \textrm{  and} \\\ x_{i,j} \in \gamma_{i,n_j} \textrm{  such that} \\\ x_{i,j} \rightarrow x \textrm{ when} \\\ j \rightarrow \infty \}
$$
Note that, these sets are compact subsets of $M$ by definition.
	
\begin{lemma}
\label{Limit-sets}
For all $i \in \{0,...,r-1\}$, we have that 
$$
\Lambda_i= \bigcup_{A_{\lambda} \in  \omega(\gamma_{i,0})} A_{\lambda} 
\: \: \: \: \: \: \: \: 
\textrm{and}
\: \: \: \: \: \: \: \: 
\Gamma_i= \bigcup_{B_{\lambda} \in  \alpha(\gamma_{i,0})} B_{\lambda}
$$
where $\omega(\gamma_{i,0})= \omega(\gamma_{i,0}, C(f))$ and $\alpha(\gamma_{i,0})= \alpha(\gamma_{i,0}, C(f))$. Therefore, $\Lambda_i$ and $\Gamma_i$ are compact and connected subsets of $M$ such that $q,\sigma \in \Lambda_i$ and $p, \sigma \in \Gamma_i$.
\end{lemma}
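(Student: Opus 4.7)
I would prove the two set-theoretic identities by double inclusion and then read off the topological consequences as easy corollaries. All the work takes place in the compact metric spaces $(M,d)$ and $(C(M),d_H)$; the tools needed are only the definitions of $\omega$- and $\alpha$-limit sets of a point in $C(M)$ under $C(f)$, the compactness of $(C(M),d_H)$, and the standard fact that $C(M)$ is closed in $2^M$ so that Hausdorff limits of continua are continua.

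\textbf{Proof of the identities.} I treat $\Lambda_i$; the argument for $\Gamma_i$ is identical after replacing $n_j\to\infty$ with $n_j\to-\infty$. For the inclusion $(\supseteq)$, fix $A_\lambda\in\omega(\gamma_{i,0},C(f))$ and choose $n_j\to\infty$ with $d_H(\gamma_{i,n_j},A_\lambda)\to 0$. Given $x\in A_\lambda$, the definition of the Hausdorff metric produces points $x_{i,j}\in\gamma_{i,n_j}$ with $d(x_{i,j},x)\le d_H(\gamma_{i,n_j},A_\lambda)\to 0$, so $x\in\Lambda_i$. For the reverse inclusion $(\subseteq)$, start from $x\in\Lambda_i$ with witnesses $n_j\to\infty$ and $x_{i,j}\in\gamma_{i,n_j}$ converging to $x$. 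By compactness of $(C(M),d_H)$, some subsequence $\gamma_{i,n_{j_k}}$ converges in Hausdorff metric to an element $A\in C(M)$, and by the very definition of $\omega(\gamma_{i,0},C(f))$ we have $A\in\omega(\gamma_{i,0},C(f))$. A routine Hausdorff-limit argument (any limit point of a sequence selected from sets converging to $A$ lies in $A$) then gives $x\in A$, completing the inclusion.

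\textbf{Topological consequences.} Each $A_\lambda\in\omega(\gamma_{i,0},C(f))$ is a continuum, hence connected, and every such $A_\lambda$ contains $\sigma$: since $\sigma$ is an endpoint of $\gamma_i$ and is fixed by $f$, we have $\sigma\in\gamma_{i,n}$ for all $n$, and a point common to every approximating set survives in any Hausdorff limit. Thus $\Lambda_i$ is a union of connected sets with common point $\sigma$, hence connected. Compactness of $\Lambda_i$ is immediate from its description as the set of accumulation points in the compact space $M$ of the family $\{\gamma_{i,n}\}_{n\ge 0}$, which is closed by a standard diagonal argument. The distinguished fixed points appear by choosing explicit witness sequences: $x_{i,j}=\sigma\in\gamma_{i,n_j}$ gives $\sigma\in\Lambda_i\cap\Gamma_i$, while $x_{i,j}=f^{n_j}(a_i)\in\gamma_{i,n_j}$ with $n_j\to\infty$ converges to the point specified by $\omega(a_i,f)=\{p\}$, and the same construction with $n_j\to-\infty$ uses $\alpha(a_i,f)=\{q\}$.

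\textbf{Where I expect friction.} The proof is mostly bookkeeping, so the single genuine step is the inclusion $(\subseteq)$ in the identity. The mild subtlety there is that the Hausdorff subsequential limit $A$ must be a continuum rather than merely a closed set, and this is handled by the closedness of $C(M)$ in $(2^M,d_H)$. I note that the non-self-accumulation hypothesis on $\{f^n(\gamma_i)\}^{i}_{n}$ plays no role in this lemma and is presumably reserved for the subsequent estimates on separated sets in the hyperspace.
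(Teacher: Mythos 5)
Your proof is correct and follows essentially the same route as the paper's: both inclusions are obtained from the definition of the Hausdorff metric together with compactness of $C(M)$, and the topological consequences (which the paper's proof leaves implicit) are read off from the fact that every $A_{\lambda}$ is a continuum containing $\sigma$. One small point: your witness computation yields $p\in\Lambda_i$ and $q\in\Gamma_i$, which is what the stated hypotheses $\omega(a_i,f)=\{p\}$ and $\alpha(a_i,f)=\{q\}$ actually give, whereas the lemma asserts $q\in\Lambda_i$ and $p\in\Gamma_i$; this discrepancy comes from an inconsistent labeling of $p$ and $q$ in the paper itself, not from a flaw in your argument.
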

	
\begin{proof}
First, suppose that $x \in A_{\lambda}$ with $A_{\lambda} \in \omega(\gamma_{i,0},C(f))$. Then there is a subsequence $\gamma_{i,n_j}$ such that $d_H(\gamma_{i,n_j},A_{\lambda}) \rightarrow 0$. From the definition of Hausdorff metric, we have that there is a subsequence $y_{i,n_j} \in \gamma_{i,n_j}$ such that $d(x,y_{i,n_j}) \rightarrow 0$. Therefore, $ x \in \Lambda_i$. Now, suppose that $x \in \Lambda_i$, then there are $n_j \rightarrow \infty$ and $x_{i,j} \in \gamma_{i,n_j}$ such that $x_{i,j} \rightarrow x$ when $j \rightarrow \infty$. Since $C(M)$ is a compact metric space (see  \cite{IN}), there are $A_{\lambda_0} \in \omega(\gamma_{i,0},C(f))$ and a subsequence of $\{\gamma_{i,n_j}\}_{j \geq 1}$ such that  $d_H(\gamma_{i,n_j}, A_{\lambda_0}) \rightarrow 0$. We claim that $x \in A_{\lambda_0}$. Indeed, by definition of Hausdorff metric, we have that $d(x_j,A_{\lambda_0}) \rightarrow 0$ so by triangle inequality we have $d(x,A_{\lambda_0})=0$. Therefore $x \in A_{\lambda_0}$. We use the same argument to show the second equality. 
\end{proof}
	
Note that, for all $i \in \{0,...,r-1\}$ the set $\Lambda_i \cup \Gamma_i$ is a fixed point of $C(f)$ since that $\omega(\gamma_{i,0},C(f))$ is strongly invariant set of $C(f)$. We set
$$
\Delta_r =  \bigcup^{r-1}_{i=0} \Lambda_i \cup \Gamma_i
$$
and	
$$
L_r = \bigcup_{n \in \mathbb{Z}} \left( \bigcup_{i=o}^{r-1} \gamma_{i,n} \cup \Delta_r \right).
$$
	
Note that, both $\Delta_r$ and $L_r$ are a compact, connected and $f$-invariant subsets of $M$. Therefore, $C(L_r)$ is a subset of $C(M)$. We denote $\hat{\gamma}_{i,n}= \gamma_{i,n}\backslash\{ \sigma\}$ and we say that a set  $K\in C(L_r)$ is a \textit{Full Cone} if it satisfies the following properties:
\begin{itemize}
\item [i.]  If $K\cap \hat{\gamma}_{i,n}\neq \emptyset$ then $\gamma_{i,n} \subset K$,
\item [ii.] $\Delta_r \subset K$.
\end{itemize}
	
We denote by $H_r$ the set of all \textit{Full Cones} in $C(L_r)$. Note that, there are only two types of full cones in $H_r$. We say that $K \in H_r$ is a \textit{Finite Full Cone} if the set $\Sigma=\{(i,n) \in \{0,...,r-1\} \times \mathbb{Z}: K \cap \hat{\gamma}_{i,n} \neq \emptyset\}$ is finite or \textit{Infinite Full Cone} if $\Sigma$ is infinite. Therefore, if $K$ is a full cone we can write this as 
$$
K= \bigcup\limits_{(i,n) \in \Sigma} \gamma_{i,n} \cup \Delta_r.
$$

\begin{lemma}
\label{CI}
The subset $H_r$ is a closed subset of $C(L_r)$ and $C(f)$-invariant.
\end{lemma}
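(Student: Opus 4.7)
The plan is to verify the two defining properties of a full cone (containment of $\Delta_r$ and the "arc–snapping" property $K\cap\hat{\gamma}_{i,n}\neq\emptyset\Rightarrow\gamma_{i,n}\subset K$) pass both to Hausdorff limits and to the action of $C(f)$. The key tool is the preliminary observation that, because the family $\{f^n(\gamma_i)\}$ is not self-accumulated, around every $x\in\hat{\gamma}_{i,n}$ there is a ball $B(x,\varepsilon_0)$ disjoint from every other arc $\gamma_{j,m}$ with $(j,m)\neq(i,n)$, and in addition disjoint from $\Delta_r$. The second disjointness is the only nontrivial point: if some $y\in\Lambda_j\cap B(x,\varepsilon_0)$ (the $\Gamma_j$ case is symmetric), then by Lemma \ref{Limit-sets} $y$ is the limit of points $y_k\in\gamma_{j,n_k}$ with $n_k\to\infty$, so $B(x,\varepsilon_0)\cap\gamma_{j,n_k}\neq\emptyset$ for large $k$, contradicting the choice of $\varepsilon_0$ (since $(j,n_k)\neq(i,n)$ eventually).

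For closedness, take $K_m\in H_r$ with $K_m\to K$ in the Hausdorff metric. First, since $\Delta_r\subset K_m$ for every $m$, each $x\in\Delta_r$ satisfies $d(x,K)\leq d_H(K_m,K)\to 0$; as $K$ is closed, $\Delta_r\subset K$. Second, suppose $K\cap\hat{\gamma}_{i,n}\ni x$; pick $\varepsilon_0>0$ as above and a point $x_m\in K_m$ with $d(x_m,x)<\varepsilon_0/2$ for large $m$. Since $K_m\subset L_r=\bigcup\gamma_{j,k}\cup\Delta_r$ and $B(x,\varepsilon_0)$ avoids $\Delta_r$ and every $\gamma_{j,m}$ with $(j,m)\neq(i,n)$, we must have $x_m\in\hat{\gamma}_{i,n}$ (after possibly shrinking $\varepsilon_0$ to separate $x$ from $\sigma$). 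Because $K_m$ is a full cone, $\gamma_{i,n}\subset K_m$, and passing to the Hausdorff limit as before yields $\gamma_{i,n}\subset K$. Hence $K\in H_r$.

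For $C(f)$-invariance, observe first that $f(\Delta_r)=\Delta_r$: this is because each $\Lambda_i\cup\Gamma_i$ was noted to be a fixed point of $C(f)$, so $\Delta_r\subset f(K)$ whenever $\Delta_r\subset K$. Next, $f$ permutes the arcs by $f(\gamma_{i,n})=\gamma_{i,n+1}$, and hence $f(\hat{\gamma}_{i,n})=\hat{\gamma}_{i,n+1}$. Given $K\in H_r$, if $f(K)\cap\hat{\gamma}_{i,n}\neq\emptyset$ then $K\cap\hat{\gamma}_{i,n-1}\neq\emptyset$, so $\gamma_{i,n-1}\subset K$ and therefore $\gamma_{i,n}=f(\gamma_{i,n-1})\subset f(K)$. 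Thus $f(K)\in H_r$, and the identical argument with $f^{-1}$ gives the reverse inclusion; in particular $C(f)(H_r)=H_r$.

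The main obstacle is the first step: ensuring that the bad case (a full cone $K_m$ that touches $\hat{\gamma}_{i,n}$ only through a point converging to $x$ from a \emph{different} arc or from $\Delta_r$) is ruled out. This is exactly where the non-self-accumulation hypothesis, together with Lemma \ref{Limit-sets}, is indispensable, and where care is needed to separate $x$ from the common vertex $\sigma$.
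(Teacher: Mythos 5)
Your proof is correct and follows essentially the same route as the paper: closedness via the non-self-accumulation remark (a ball around $x\in\hat{\gamma}_{i,n}$ meeting no other arc and no part of $\Delta_r$) plus passage to the Hausdorff limit, and invariance via $f(\gamma_{i,n})=\gamma_{i,n+1}$ and $f(\Delta_r)=\Delta_r$. You in fact supply two details the paper leaves implicit --- the justification that the ball avoids $\Delta_r$ (via the limit description of $\Lambda_j$, $\Gamma_j$) and the need to shrink the ball away from the common vertex $\sigma$ --- so nothing is missing.
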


\begin{proof}
First, we will prove that $H_r$ is a closed subset of $C(L_r)$. Indeed, let $\{K_m\}_{m \in \mathbb{N}}$ be a sequence in $H_r$ such that $d_H(K_m,K_0) \rightarrow 0$ if $m \rightarrow \infty$. First, note that for all $m \in \mathbb{N}$, $\Delta_r \subset K_m$, therefore $\Delta_r \subset K_0$. Now, suppose that for some $i_0 \in \{0,...,r-1\}$ and some $n_0 \in \mathbb{Z}$, $\hat{\gamma}_{i,n_0} \cap K_0 \neq \emptyset$. We claim that $\hat{\gamma}_{i_0,n_0} \subset K_0$. Indeed, let $x \in \hat{\gamma}_{i_0,n_0} \cap K_0$. So there exist $\varepsilon_0>0$ such that $B(x,\varepsilon_0) \cap \gamma_{i,n}= \emptyset$ for any $i \neq i_0$ and $n \neq n_0$. Note that, $B(x, \varepsilon_0) \cap \Delta_r = \emptyset$. Then for all $\varepsilon < \varepsilon_0$, there exists $m_0 \in \mathbb{N}$ such that $d_H(K_m,K_0) < \varepsilon$ for each $m \geq m_0$. Thus, for $x$ there exists $y_m \in K_m$ with $d(x,y_m) < \varepsilon$, but this implies $y_m \in \gamma_{i_0,n_0}$ and, therefore, $\gamma_{i_0,n_0} \subset K_m$. This implies that for every $\varepsilon >0$, $\gamma_{i_0,n_0} \subset V(K_0, \varepsilon)$. So we conclude that $\gamma_{i_0,n_0} \subset K$, i.e, $K_0$ is a full cone. 
		
Now, we will prove that $H_r$ is a $C(f)$ invariant set. Let $K$ be a full cone, then 
$$
K= \bigcup_{(i,n) \in \Sigma}\gamma_{i,n} \cup \Delta_r .
$$
Thus, we have that 
$$
C(f)(K)= f(\bigcup_{(i,n) \in \Sigma} \gamma_{i,n} \cup \Delta_r)= \bigcup_{(i,n) \in \Sigma} f(\gamma_{i,n}) \cup \Delta_r = \bigcup_{(i,n) \in \Sigma} \gamma_{i,n+1} \cup \Delta_r.
$$
Therefore, $C(f)(K)$ is a full cone. Using the same argument above for $C(f^{-1})$, we obtain that $H_r$ is $C(f)$ invariant.
\end{proof}
	
By Lemma ~\ref{CI}, we have that $(C(f),H_r)$ is a subsystem of $(C(f),C(M))$. Now, we will show that $C(f^{-1})$ contain a full shift of $2^r$ symbols. 
	
\begin{lemma}
\label{Semiconj}
The induced map $C(f^{-1}): H_r \rightarrow H_r$ is conjugate to  the full shift $\sigma:(\{0,1\}^r)^{\mathbb{Z}} \to (\{0,1\}^r)^{\mathbb{Z}}$.	
\end{lemma}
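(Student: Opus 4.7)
The plan is to exhibit an explicit conjugacy $\phi: H_r \to (\{0,1\}^r)^{\mathbb{Z}}$ that reads off from a full cone $K$ which arcs $\gamma_{i,n}$ lie in it. Set $\phi(K) = (w_{i,n}(K))_{0 \leq i \leq r-1,\, n \in \mathbb{Z}}$, with $w_{i,n}(K) = 1$ if $\gamma_{i,n} \subset K$ and $w_{i,n}(K) = 0$ otherwise; by the defining property of a full cone this is equivalent to $K \cap \hat{\gamma}_{i,n} \neq \emptyset$. Equivariance will be immediate: since $f^{-1}(\gamma_{i,n}) = \gamma_{i,n-1}$ and $\Delta_r$ is $f$-invariant, the condition $\gamma_{i,n} \subset K$ translates to $\gamma_{i,n-1} \subset C(f^{-1})(K)$, so that $\phi$ intertwines $C(f^{-1})$ with the shift $\sigma$.

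Injectivity of $\phi$ is immediate from the definition of a full cone. For surjectivity, given $w \in (\{0,1\}^r)^{\mathbb{Z}}$ I form $K_w := \Delta_r \cup \bigcup_{w_{i,n}=1} \gamma_{i,n}$ and check it is a full cone. Connectedness is free since every $\gamma_{i,n}$ contains the fixed point $\sigma \in \Delta_r$; closedness is the only subtle point when $w$ has infinite support, which I would handle using Lemma \ref{Limit-sets}: since $i$ ranges over a finite set, any accumulation point of a sequence taken from infinitely many distinct $\gamma_{i_k,n_k}$ admits a subsequence along a single $i$ with $|n_k| \to \infty$, and so accumulates inside some $\Lambda_i$ or $\Gamma_i \subset \Delta_r$.

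The main obstacle is the continuity of $\phi$, that is, showing that each coordinate $K \mapsto w_{i,n}(K)$ is locally constant in the Hausdorff metric. My approach is to fix $x \in \hat{\gamma}_{i,n}$ and use the non-self-accumulation hypothesis, together with the Remark following its definition, to produce $\varepsilon_0 > 0$ with
$$B(x,\varepsilon_0) \cap \Bigl(\Delta_r \cup \bigcup_{(i',n') \neq (i,n)} \gamma_{i',n'}\Bigr) = \emptyset;$$
the key observation is that if $x$ lay in some $\Lambda_j$ or $\Gamma_j$, it would furnish a self-accumulating sequence and contradict the hypothesis. Given $K_m \to K$ in $d_H$, in the case $w_{i,n}(K) = 1$ I choose $y_m \in K_m$ near $x \in K$; the choice of $\varepsilon_0$ forces $y_m \in \hat{\gamma}_{i,n}$, and the full-cone property then gives $\gamma_{i,n} \subset K_m$. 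In the case $w_{i,n}(K) = 0$, if $\gamma_{i,n} \subset K_{m_l}$ along a subsequence, then the endpoint $f^n(a_i)$ lies in each $K_{m_l}$, and passing to the Hausdorff limit produces $f^n(a_i) \in K \cap \hat{\gamma}_{i,n}$, contradicting $w_{i,n}(K) = 0$.

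Since $H_r$ is closed in the compact space $C(L_r)$ by Lemma \ref{CI}, the continuous bijection $\phi$ is automatically a homeomorphism, and together with the equivariance this gives the desired conjugacy.
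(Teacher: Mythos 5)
Your proposal is correct and follows essentially the same route as the paper: the same coordinate map $\phi$ recording which arcs $\hat{\gamma}_{i,n}$ meet $K$, the same equivariance computation via $f^{-1}(\gamma_{i,n})=\gamma_{i,n-1}$, and the same use of the non-self-accumulation hypothesis to control continuity. You simply supply details the paper leaves implicit (the explicit construction of the preimage full cone for surjectivity, both directions of the local constancy of each coordinate, and the compactness argument for the inverse), all of which check out.
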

	
\begin{proof}
Let $\phi: H_r \to (\{0,1\}^r)^{\mathbb{Z}}$ be a map defined by 
\vspace{0.18 cm}
		
$\phi(K)= ((w_{1,n},w_{2,n},...,w_{r,n}))_{n \in \mathbb{Z}} $ 
$
\textrm{   where   } w_{i,n}=\left\{\begin{array}{rc}
1,&\mbox{if}\quad \hat{\gamma}_{i,n} \cap K \neq \emptyset;\\
0, &\mbox{otherwise};\quad
\end{array}\right. 
$ 
		
for every $K \in H_r$.
		
It is clear that $\phi$ is onto and injective. We claim that $\phi$ is continuous. In fact, let  $\{K_j\}_{j \in \mathbb{N}}$ be a sequence of full cones such that $d_H(K_j,K) \rightarrow 0$ when $j \rightarrow \infty$. Since the sequence $\{ \gamma_{i,n} \}^{i \in \{0,...,r-1\}}_{n \in \mathbb{Z}}$ is not self-accumulated, for any $\varepsilon >0$, small enough, there exist $j_0, n_0 \in \mathbb{N}$ such that if $|n| \leq n_0$, $i \in \{0,...,r-1\}$ and $\hat{\gamma}_{i,n} \subset K$ then $\hat{\gamma}_{i,n} \subset K_j$ for every $j \geq j_0$. Therefore, $d(\phi(K_j), \phi(K)) \rightarrow 0$ when $j \rightarrow \infty$ and thus $\phi$ is continuous. The fact that $\sigma \circ \Phi= \Phi \circ C(f^{-1})|_{H_r}$ follows from:
$$
\hat{\gamma}_{i,n} \cap K \neq \emptyset \Leftrightarrow f^{-1}(\hat{\gamma}_{i,n}) \cap f^{-1}(K) \neq \emptyset \Leftrightarrow \hat{\gamma}_{i,n-1} \cap f^{-1}(K) \neq \emptyset.
$$
\end{proof}
	
We are ready to show the main result.\\
	
\begin{theorem}
\label{t-Alexander}
\textit{Let $M^n$ be a compact, connected $n$-dimensional space with $n \geq 2$ and let $f: M^n \rightarrow M^n$ be a homeomorphism with three fixed points $p,q$ and $\sigma$. If there exists an infinite countable set $A=\{a_0,a_1,a_2...\} \subset M^n$ such that} 
\begin{itemize}
\item [i.] \textit{for every $i\geq 0$, $\alpha(a_i,f)=\{p\}$ and $\omega(a_i,f)=\{q\}$},
\item [ii.] \textit{for every $i\geq 0$, $a_i \neq p$ and $a_i \neq q$},
\item [iii.] \textit{for every pair $i \neq j$, $i \geq 0$, $j \geq 0$,
$$
\{f^k(a_i): k \in \mathbb{Z}\} \cap \{f^k(a_j): k \in \mathbb{Z}\}= \emptyset,
$$}
\item [iv.] \textit{for every $r \geq 1$ and  $i \in \{0,1,...,r-1\}$, there exist arcs $\gamma_i$ from $a_i$ to $\sigma$, such that the sequence $\{f^k(\gamma_i)\}^{i \in \{0,...,r-1\}}_{k \in \mathbb{Z}}$ is not self-accumulated}.
\end{itemize} 
\textit{Then $h(C(f))=\infty$}.
\end{theorem}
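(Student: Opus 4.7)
The plan is to recognize that Theorem \ref{t-Alexander} follows by assembling the technical lemmas proved in the paragraphs just above it into a single entropy bound. Fix an arbitrary $r \geq 1$. By hypothesis (iv), we obtain arcs $\gamma_0,\ldots,\gamma_{r-1}$ joining each $a_i$ to $\sigma$ whose forward and backward iterates $\{f^k(\gamma_i)\}_{i,k}$ form a non self-accumulated family. This places us exactly in the setting of the construction preceding the theorem, and so the compact, connected, $f$-invariant sets $\Delta_r$ and $L_r$ are well defined (with $p,q,\sigma \in \Delta_r$ by Lemma \ref{Limit-sets}).

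Next I would quote Lemma \ref{CI} to conclude that the set $H_r$ of full cones is a closed, $C(f)$-invariant subset of $C(L_r) \subset C(M)$, so that $(H_r, C(f)|_{H_r})$ is a genuine subsystem of $(C(M), C(f))$. Then, applying Lemma \ref{Semiconj}, the restriction $C(f^{-1})|_{H_r}$ is topologically conjugate to the full two-sided shift
\[
\sigma: (\{0,1\}^{r})^{\mathbb{Z}} \longrightarrow (\{0,1\}^{r})^{\mathbb{Z}},
\]
whose topological entropy is well known to equal $r\log 2$.

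Since topological entropy is monotone under passing to closed invariant subsystems and invariant under topological conjugacy, this yields
\[
h(C(f^{-1})) \;\geq\; h\bigl(C(f^{-1})|_{H_r}\bigr) \;=\; r\log 2.
\]
Because $f$ is a homeomorphism the induced map $C(f)$ is also a homeomorphism, so $h(C(f)) = h(C(f^{-1}))$. Since $r \geq 1$ was arbitrary, letting $r \to \infty$ forces $h(C(f)) = \infty$, which is the desired conclusion.

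The main (already resolved) obstacle is not the entropy count itself but the two prerequisite facts it rests on: closedness of $H_r$ in $C(L_r)$ and continuity of the coding map $\phi$. Both are exactly where the non self-accumulation hypothesis in (iv) is consumed, since it is what lets one isolate a small neighborhood around any point of $\hat\gamma_{i_0,n_0}$ that misses every other $\gamma_{i,n}$ and also the accumulation set $\Delta_r$. Once that is granted by Lemmas \ref{CI} and \ref{Semiconj}, Theorem \ref{t-Alexander} reduces to the short three-line entropy estimate above.
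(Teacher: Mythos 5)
Your proposal is correct and follows essentially the same route as the paper: fix $r$, invoke Lemma \ref{CI} to get the closed invariant subsystem $H_r$ of full cones, invoke Lemma \ref{Semiconj} to conjugate $C(f^{-1})|_{H_r}$ to the full shift on $(\{0,1\}^r)^{\mathbb{Z}}$, conclude $h(C(f))\geq r\log 2$, and let $r\to\infty$. Your explicit remark that $h(C(f))=h(C(f^{-1}))$ because $C(f)$ is a homeomorphism is a small clarification the paper leaves implicit, but it is not a different argument.
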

	
\begin{proof}
Fix $r \in \mathbb{N}$. We consider $H_r \subset C(M^n)$ the set of full arcs and its induced map $C(f^{-1}):H_r \rightarrow H_r$. By Lemma ~\ref{Semiconj} we have $C(f^{-1})$ is semiconjugate to $\sigma$ defined on $(\{0,1\}^r)^{\mathbb{Z}}$. Therefore, 
$$
h(C(f)) \geq h(C(f^{-1})|_{H_r}) = r \log 2.
$$ 
Since that $r$ is fix but arbitrary, we obtain $h(C(f))= \infty$.
\end{proof}

\ack The authors want to thank Prof. D. Kwietniak for useful conversations. The second author wants to thank Prof. N. Bernardes and Phd. D. Obata.

\end{document}